\theoremstyle{plain}
\newtheorem{Thm}{Theorem}
\newtheorem{Pro}[Thm]{Proposition}
\newtheorem{Lem}[Thm]{Lemma}
\newcommand{\di}{\operatorname{div}}
\newcommand{\re}{\mathbb{R}}
\newcommand{\ho}{{\mathop
{h}\limits^ \circ}}
\newcommand{\Ao}{{\mathop
{A}\limits^ \circ}}
\begin{document}

\title[Translating solitons]
{Bernstein theorem for translating solitons of hypersurfaces }

\author{Li Ma, Vicente Miquel}

\address{ Dr.Li Ma, Distinguished professor\\
Department of mathematics \\
Henan Normal university \\
Xinxiang, 453007 \\
China
}
\email{lma@tsinghua.edu.cn}

\address{ Vicente Miquel \\
Department of Geometry and Topology \\
University of Valencia \\
46100-Burjassot (Valencia), Spain
}

\email{miquel@uv.es}

\thanks{The research is partially supported by the National Natural Science
Foundation of China No.11271111 and SRFDP 20090002110019 and by the project
DGI (Spain) and FEDER  project MTM2010-15444, MTM2013-46961-P. and the G. V. Project  PROMETEOII/2014/064.. \\
 This work was done when the first named
author was visiting Valencia University in April 2014 and he would
like to thank the hospitality of the Department of geometry and
Topology.
}

\begin{abstract}
In this paper, we prove a monotonicity formula and some Bernstein type results
for translating solitons of hypersurfaces in $\re^{n+1}$, giving some conditions under which a translating soliton is a hyperplane. We also show a gap theorem for the translating soliton of hypersurfaces in
$R^{n+k}$, namely, if the $L^n$ norm of the second fundamental form
of the soliton is small enough, then it is a hyperplane.

{ \textbf{Mathematics Subject Classification 2010}:
53C21,53C44}

{ \textbf{Keywords}: translating solitons, Bernstein theorem, monotonicity, volume
growth}
\end{abstract}

 \maketitle

\section{introduction}

We study the translating solitons of properly immersed hypersurfaces
$F=F(x,t)\subset \re^{n+1}$, $x\in M^n, 0\leq t<T$, evolving under the mean
curvature flow defined by
$$(\partial_tF)^{\perp}=\vec{H}(F),$$
where $\vec{H}(F)$ is the mean curvature vector of the hypersurface
$F=F(x,t)$ at time $t$ and $M\subset \re^{n+1}$ is a fixed hypersurface
. These solitons are characterized by the soliton
equation
\begin{equation}\label{soliton}
H=<\nu,\omega>
\end{equation}
where $\nu$ is an outer unit normal to the fixed hypersurface $M\subset
\re^{n+1}$, $\vec{H}(F)=-H\nu$, and $\omega$ is a fixed unit vector in
$\re^{n+1}$.  When there is no confusion, we identify the position vector $x$ in $M^n$ with $F(x)$. Here and below, we use the notations as in \cite{H} such that $\vec{H}=-H\nu$ is the mean curvature vector and $A=\{h_{ij}\}:=h$ is the second fundamental form with $h_{ij}=<D_{e_i}\nu,e_j>$ and $H=h_{jj}$ for the moving orthonormal frame ${e_i}$ on $M$, where $D$ denotes the usual directional derivative in $\re^{n+1}$.  In this case the flow is given by
$$
F(x,t):=F(x)-t\omega, \ \ or \ \
\partial_tF=F_{*}(\partial_t)=-\omega,
$$
with the right side $F:M\to \re^{n+1}$ being a fixed hypersurface in
$\re^{n+1}$.

As type II singularity models of mean curvature flow, the
properties of translating solitons may be of importance to study. In
particular, the Bernstein type theorems of translating solitons are
important.

There are relatively few results about the translating
solitons. Let us just mention a few. In \cite{W}, X.-J.Wang studies
 symmetric properties of the convex graphic solitons; in \cite{Ma},
L. Ma studies the stability of the Grim Reaper, which is a
translating soliton to the curve shortening flow in the plane; in the series of papers \cite{N1,N2,N3} X. H. Nguyen constructs new examples of translating solitons; in a very recent paper \cite{MSS}, F. Martin, A. Savas-Halilaj and K. Smoczyk give, among others, some rigidity theorems for the hyperplanes and the Grim-Reaper planes and topological obstructions to their existence. For related references, we may refer to \cite{I} and \cite{Sm}.

Fix $z\in M$. Let $ S_z: \re^{n+1} \longrightarrow \re $ be the function defined by $S_z(x)=<x-z,\omega>$. WE may write by $S=S_z$ when there is no confusion and we may consider $z=0$ as the origin point. Then the soliton equation \eqref{soliton} can
be written as
\begin{equation}\label{HDS}
H = D_\nu S  \ \ \text{in}  \ \  M.
\end{equation}

Let $g$ be the metric induced on $M$ by the standard euclidean metric $<\ ,\ >$ on $\re^{n+1}$, and $dv_g$ the volume form induced by the
metric $g$ on $M$. By $\nabla$ we shall denote the Levi-Civita connection induced on $M$ by its metric $g$, and also the gradient and the differential of a function $f:M\longrightarrow \re$.

With the exception of section \ref{CGR}, from now on  we will suppose that $(M,g)$ is connected and complete.

Our first contributions about the properties of translating solitons are the following two easy observations:

\begin{Pro}\label{valencia}
 When $(M,g)$ is a translating soliton in  $\re^{n+1}$, we always have $\int_M |\nabla S(x)|dv_g=\infty$.
\end{Pro}

\begin{Pro}\label{valencia-422}
Assume that $n=2$, $(M^2,g)$ is a simply connected translating soliton which is conformal to $R^2$. Then
$\inf_MS=-\infty$.
\end{Pro}

These propositions are our motivation to look for other conditions giving
 Bernstein type theorems.
For most of those theorems we shall use on $M$ the measure
$$d\mu=e^{-S(x)}dv_g.$$
With this measure, the same ideas used to prove the propositions above will give the following result.

\begin{Pro}\label{valencia2}
Assume that $H\geq 0$ and $|\nabla H|\in L^1(M,d\mu)$, where $(M,g)$
is a  translating soliton in  $\re^{n+1}$. Then $M = \re\times \Sigma$, where $\Sigma$ is a minimal
hypersurface in $\re^{n}$.
\end{Pro}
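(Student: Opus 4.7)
The approach is to exploit the drift Laplacian $\mathcal{L} f := \Delta f + \langle \nabla S, \nabla f\rangle$, which is formally self-adjoint with respect to $d\mu = e^S dv_g$ thanks to the identity $e^S \mathcal{L} f = \di(e^S \nabla f)$. The plan is to prove the pointwise formula
\[
\mathcal{L} H = -|A|^2 H
\]
on any translating soliton, integrate it against $d\mu$ so that the integral vanishes by the $L^1$ hypothesis on $|\nabla H|$, and use $H\geq 0$ to conclude that $|A|^2 H \equiv 0$. A short rigidity argument then forces $M$ to split as $\re\times \Sigma$ with $\Sigma$ minimal.

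The first step is to verify $\mathcal{L} H = -|A|^2 H$. Writing $\omega = \omega^\top + H\nu$, so that $\nabla S = \omega^\top$, and differentiating the soliton equation $H = \langle \nu, \omega\rangle$ once using $\nabla^{\re}_i \nu = -A_{ij} e_j$, one obtains $\nabla_i H = -A_{ij}\langle \omega, e_j\rangle$. Differentiating a second time in normal coordinates, then applying the Codazzi identity $\nabla_i A_{ij} = \nabla_j H$ together with the computation $\nabla_i\langle \omega, e_j\rangle = HA_{ij}$, yields $\Delta H = -\langle \omega^\top, \nabla H\rangle - |A|^2 H$, which is the required identity.

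The second step is a weighted cutoff argument. Choose $\phi_R$ with $\phi_R \equiv 1$ on the intrinsic ball $B_R \subset M$, supported in $B_{2R}$, with $|\nabla \phi_R| \leq C/R$; such cutoffs exist because $(M,g)$ is complete. Integration by parts gives
\[
\int_M \phi_R \, \mathcal{L} H \, d\mu = -\int_M \langle \nabla \phi_R, \nabla H\rangle \, d\mu,
\]
and the right-hand side is bounded by $(C/R)\,\|\nabla H\|_{L^1(d\mu)}$, which tends to $0$ as $R \to \infty$. Since $-\mathcal{L} H = |A|^2 H \geq 0$, monotone convergence applied on the left yields $\int_M |A|^2 H \, d\mu = 0$, so $|A|^2 H \equiv 0$ on $M$ by continuity.

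Finally, on the open set $U = \{H > 0\}$ one must have $|A|^2 = 0$; but $|A|^2 = 0$ forces $H = \tr A = 0$, contradicting $H > 0$ on $U$, so $U = \emptyset$ and $H \equiv 0$. Hence $\omega \perp \nu$ everywhere, i.e.\ $\omega$ is tangent to $M$, and because $\omega$ is parallel in $\re^{n+1}$ and $M$ is complete, every integral line $p+\re\omega$ is contained in $M$. Splitting $\re^{n+1} = \re\omega \oplus \omega^\perp$ gives $M = \re \times \Sigma$ with $\Sigma = M \cap \omega^\perp$, and $\Sigma$ is minimal in $\omega^\perp \cong \re^n$ because $M$ is a cylinder over $\Sigma$ with vanishing mean curvature. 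The only step I expect to need any genuine care is the weighted cutoff argument; since the integrability hypothesis is stated directly against $d\mu$, however, the bound $(C/R)\,\|\nabla H\|_{L^1(d\mu)}$ is immediate and no distortion from the weight $e^S$ appears.
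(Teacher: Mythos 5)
Your proposal is correct and follows essentially the same route as the paper: both rest on the identity $\Delta H+\nabla_{\omega^\top}H+|A|^2H=0$ (the paper quotes Huisken's evolution equation $\partial_tH=\Delta H+|A|^2H$ rather than differentiating the soliton equation directly), integrate it against $d\mu=e^S dv_g$ via the self-adjointness of the drift operator, use $H\ge 0$ to get $|A|^2H\equiv 0$ and hence $H\equiv 0$, and then split off the $\omega$-line. The only cosmetic differences are your direct Codazzi computation of the key identity and your use of intrinsic Lipschitz cutoffs in place of the paper's extrinsic balls $B_R\cap M$; both are fine.
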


Generally speaking the condition that $|\nabla H|\in L^1(M,d\mu)$ is
very restrictive and one may try to find other conditions weaker but still related to the second fundamental form of the translating
solitons.

\begin{Thm}\label{mono}
 When $(M,g)$ is a translating soliton in  $\re^{n+k}$, we have the following monotonicity for $0<s<t$,
 $$
 e^{-t}\mu(B_t\bigcap M)\geq e^{-s}\mu(B_s\bigcap M).
 $$
 In particular, $\mu(M)=\infty$ and $vol(B_R\bigcap M)\geq cR$ for some uniform constant $c>0$.
\end{Thm}
One consequence of this result is below.

\begin{Thm}\label{mali-VL} Assume that $M^n\subset \re^{n+1}$ is a translating
  soliton
  such that $M$ is the graph of a function $u=u(x_1,...,x_n)$,  and
$|u|\leq C$ for some uniform constant $C>0$. Then the coordinate $x_{n+1}$ can not in the direction of $\pm\omega$.
\end{Thm}

Using
ideas from the paper \cite{SSY}, we shall use these facts to
prove the following result.

We now propose a key condition to a Bernstein type result. The condition is
  \begin{equation}\label{burjasso}
    |\nabla A|\leq \frac{3n+1}{2n}|\nabla H|, \ \ in \ \ M.
  \end{equation}
One the translating soliton $M$, we have $\nabla H=<\nabla \nu, \omega>=A(\cdot, \omega)$ and the condition becomes
$|\nabla A|\leq \frac{3n+1}{2n}|A(\cdot, \omega)|$.

We shall comment on condition \eqref{burjasso} in section \ref{pkato}. Here, we just note that  every $x\in M$, there is an orthonormal basis $\{e_i\}$ of
   $T_xM$ formed by principal vectors $e_i$ of $M$ at $x$ such that in the frame $(e_i)$, $\nabla A=(h_{ijk})$, and by the Cauchy-Schwartz inequality, the condition \eqref{burjasso} implies that
    \begin{equation}\label{burja}
    \sum_{j} \nabla_jh_{jj} \nabla_jH\leq |\nabla A||\nabla H|\leq\frac{3n+1}{2n}|\nabla H|^2,
     \end{equation}
     which is the condition we shall use in section \ref{pkato}. Another remark is that we may replace the condition (\ref{burjasso}) by the condition
   $$
   |\nabla \Ao|\leq \frac{3n-1}{2n}|\nabla H|
   $$
   for the traceless part $\Ao = A-\frac1n H\ I$ of $A$.

The next is our main result stated as follows.
  \begin{Thm}\label{mali-VLC} Assume that $M^n\subset \re^{n+1}$ is a mean convex translating
  soliton
which satisfies \eqref{burjasso}. Assume further
that
\begin{equation}\label{A0L1} \int_M|\Ao |^2d\mu<\infty. \text{ (where $\Ao $ is the traceless part of $A$)}
\end{equation}
Then $M$ is a hyperplane.

\end{Thm}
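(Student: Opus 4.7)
My plan is to follow the strategy of Theorem \ref{mali} but to replace the Schoen--Simon--Yau iteration (which is responsible for the restrictions $n\le 6$ and polynomial volume growth) by a single cut-off argument driven by the integrability hypothesis \eqref{A0L1}. The key input we inherit from the proof of Theorem \ref{mali} is a Simons-type inequality with respect to the drift Laplacian
\[
\Delta_\mu f := \Delta f + \langle\nabla S,\nabla f\rangle,
\]
which is self-adjoint with respect to the measure $d\mu = e^{S}dv_g$. Together with the mean-convexity $H\ge 0$ and the refined Kato-type inequality that follows from \eqref{burjasso}, one obtains a pointwise estimate of the form
\[
|\Ao|\,\Delta_\mu |\Ao| \;\ge\; \alpha\,|\nabla|\Ao||^2 - \beta\,|\Ao|^4
\]
for positive constants $\alpha,\beta$ depending only on $n$. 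This is essentially the same differential inequality extracted in the proof of Theorem \ref{mali}.

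Next I take a standard Lipschitz cut-off $\phi$ with $\phi\equiv 1$ on $M\cap B_R$, $\phi\equiv 0$ outside $M\cap B_{2R}$, and $|\nabla\phi|\le 2/R$. Multiplying the above inequality by $\phi^2$, integrating against $d\mu$, and using the self-adjointness of $\Delta_\mu$ to transfer a derivative to $\phi$ via integration by parts, I obtain
\[
\int_M \phi^2|\Ao|^4\,d\mu \;\le\; C\int_M |\nabla\phi|^2\,|\Ao|^2\,d\mu \;\le\; \frac{C}{R^2}\int_M |\Ao|^2\,d\mu,
\]
after absorbing the cross terms by Cauchy--Schwarz and the refined Kato inequality. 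Because \eqref{A0L1} gives $\int_M|\Ao|^2\,d\mu < \infty$, letting $R\to\infty$ forces $|\Ao|\equiv 0$ on $M$.

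Finally, I invoke the classification of connected complete totally umbilical hypersurfaces of $\re^{n+1}$: $M$ is either a hyperplane or a round sphere. A sphere cannot be a translating soliton because on it $H$ is a nonzero constant while $\langle\nu,\omega\rangle$ is non-constant, contradicting \eqref{soliton}. Hence $M$ is a hyperplane.

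The main obstacle is really already handled in the proof of Theorem \ref{mali}, namely verifying that condition \eqref{burjasso} produces the strengthened Kato inequality needed to absorb the $|\nabla|\Ao||^2$ term with a favourable sign; what remains for the present theorem is only the weighted cut-off argument, which is simpler than the SSY iteration because the $L^2(d\mu)$ integrability of $|\Ao|$ is handed to us as an hypothesis rather than as a conclusion to be bootstrapped. For this reason I expect the theorem to admit, essentially, a short argument expressed as a corollary of the estimates established while proving Theorem \ref{mali}.
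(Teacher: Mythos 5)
There is a genuine gap in your central estimate. The Simons-type inequality you start from (the paper's \eqref{katos2}, with the drift operator $L$ playing the role of your $\Delta_\mu$) has the quartic term on the \emph{helpful} side for a lower bound and the \emph{unhelpful} side for the upper bound you claim: multiplying $|\Ao|L|\Ao|+|A|^2|\Ao|^2\ge \frac1n|\nabla|\Ao||^2$ by $\phi^2$, integrating against $d\mu$ and integrating by parts yields
\begin{equation*}
\int_M \phi^2|A|^2|\Ao|^2\,d\mu \;\ge\; \Bigl(1+\tfrac1n\Bigr)\int_M \phi^2\,|\nabla|\Ao||^2\,d\mu \;+\; 2\int_M \phi\,|\Ao|\,\langle\nabla\phi,\nabla|\Ao|\rangle\,d\mu ,
\end{equation*}
i.e.\ a \emph{lower} bound on the quartic-type integral by gradient terms. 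No application of Cauchy--Schwarz or of the refined Kato inequality converts this into $\int_M\phi^2|\Ao|^4\,d\mu\le CR^{-2}\int_M|\Ao|^2\,d\mu$. The missing ingredient is the stability inequality \eqref{stable}, which is available here because mean convexity plus the maximum principle applied to \eqref{mean} give $H>0$; testing \eqref{stable} with $\phi|\Ao|$ bounds $\int_M\phi^2|A|^2|\Ao|^2\,d\mu$ from \emph{above} by $\int_M|\nabla(\phi|\Ao|)|^2\,d\mu$, and only the combination of the two inequalities cancels the dangerous terms and produces (this is exactly \eqref{key5} with $q=0$)
\begin{equation*}
\frac1n\int_M \phi^2\,|\nabla|\Ao||^2\,d\mu \;\le\; C\int_M |\Ao|^2\,|\nabla\phi|^2\,d\mu \;\le\; \frac{C}{R^2}\int_M|\Ao|^2\,d\mu .
\end{equation*}

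Once stability is restored your plan is essentially the paper's: the paper runs the computations of Lemma \ref{mali8} with $q=0$, obtains the displayed gradient estimate, lets $R\to\infty$ using \eqref{A0L1} to get $\nabla|\Ao|=0$, and then disposes of the possibility that $|\Ao|$ is a nonzero constant by noting this would force $\mu(M)<\infty$, so the volume-growth hypothesis of Theorem \ref{mali} holds trivially and \eqref{A0pC} with $p=4$ gives $\Ao=0$. If you feed the gradient estimate back into the stability inequality you can instead get $\int_{B_{\theta R}}|\Ao|^4\,d\mu\le CR^{-2}\int_M|\Ao|^2\,d\mu\to 0$ directly, which would streamline that case analysis; your concluding step (complete umbilical hypersurface, the sphere excluded since a translating soliton is non-compact) is fine. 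But as written, the proof omits the one inequality that makes the estimate go in the right direction.
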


Finally we state a theorem which is true also in higher codimension. In this case the equation \eqref{soliton} for the mean curvature vector of a soliton becomes
\begin{equation}\label{solitonhc}
\vec{H} = \omega^\bot
\end{equation}
where $\omega^\bot$ is the projection on the normal bundle of the submanifold $M$ of a unit vector $\omega\in\re^{n+k}$. In this case, instead of the scalar second fundamental form $A$, we shall use the vectorial second fundamental form $\alpha$ defined by $\alpha(X,Y) =(D_XY)^\bot$.

\begin{Thm}\label{gap}
Let $M^n\to R^{n+k}$ be a translating soliton, $n\geq 2$.
There exists a constant $\epsilon_0>0$ such that if
$$
\int_M |\alpha|^n dv_g\leq \epsilon_0,
$$
then $M^n$ is an $n$-dimensional plane.
\end{Thm}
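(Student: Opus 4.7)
The plan is to combine a Simons-type pointwise inequality for $|\alpha|^2$ with the Michael--Simon Sobolev inequality on $M$, in the spirit of the classical $L^n$ gap theorems for minimal submanifolds. Differentiating $\vec H=\omega^\bot$ along $M$ and using $D\omega\equiv 0$ in $\re^{n+k}$ yields $\nabla\omT = A^{\omega^\bot}(\cdot)$ and $\nabla^\bot\omega^\bot=-\alpha(\cdot,\omT)$. Plugging these into the Codazzi--Ricci commutation formula for $\Delta^\bot\alpha$ (which normally reads $\Delta^\bot\alpha=\nabla^2\vec H + \text{quartic}(\alpha)$), and then applying the Kato bound $|\nabla\alpha|^2\ge(1+\tfrac{2}{n})|\nabla|\alpha||^2$, one derives a Simons-type lower bound
\begin{equation*}
\tfrac12\Delta|\alpha|^2 + \langle\omT, \nabla\tfrac12|\alpha|^2\rangle \ge |\nabla\alpha|^2 - c(n,k)\,|\alpha|^4.
\end{equation*}
The soliton identity also gives $|\vec H|=|\omega^\bot|\le 1$, which bounds the mean-curvature term appearing in Michael--Simon.

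Next I would run the standard Sobolev test-function argument. Multiply the above Simons inequality by $\eta^2|\alpha|^{n-2}$ for a Lipschitz cutoff $\eta$ supported in $B_{2R}\cap M$, equal to $1$ on $B_R\cap M$ and satisfying $|\nabla\eta|\le 2/R$, and integrate by parts against $dv_g$; this converts the Laplacian and drift terms into $L^2$ gradient expressions in $|\nabla|\alpha||$, $|\alpha|$, $\eta$ and $|\nabla\eta|$. Substituting $f=\eta|\alpha|^{n/2}$ into the Michael--Simon inequality
\begin{equation*}
\left(\int_M f^{n/(n-1)}\,dv_g\right)^{(n-1)/n} \le C_n\int_M\bigl(|\nabla f| + f\bigr)\,dv_g,
\end{equation*}
and applying H\"older with exponents $n$ and $n/(n-1)$ (using $n\ge 2$), one arrives at an estimate of the form
\begin{equation*}
\int_M\eta^2|\alpha|^n\,dv_g \le C_{n,k}\Bigl(\int_{\mathrm{supp}\,\eta}|\alpha|^n\,dv_g\Bigr)^{2/n}\int_M(\eta^2+|\nabla\eta|^2)|\alpha|^n\,dv_g.
\end{equation*}
Choosing $\epsilon_0$ so small that $C_{n,k}\,\epsilon_0^{2/n}\le 1/2$ absorbs the $\eta^2$-part on the right into the left side; then letting $R\to\infty$ and invoking $\int_M|\alpha|^n\,dv_g\le\epsilon_0<\infty$ forces the remaining integral to $0$, so $\alpha\equiv 0$ and $M$ is an affine $n$-plane.

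The main obstacle is the presence of two translator-specific terms absent from the minimal-submanifold case: the drift $\langle\omT,\nabla|\alpha|^2\rangle$ in the Simons inequality and the inhomogeneous $|\vec H|\,f$ term in Michael--Simon. Both must be absorbed into the principal $L^2$ gradient energy via integration by parts and Cauchy--Schwarz, which in turn fixes the exponent $q=n/2$ in the test function $f=\eta|\alpha|^q$ and determines the numerical value of $C_{n,k}$ that sets the absorption threshold $\epsilon_0$. Extra care is needed in codimension $k\ge 2$, where the quartic Simons constant $c(n,k)$ is larger than in the hypersurface case and no Kato bound sharper than $(n+2)/n$ is available \emph{a priori}.
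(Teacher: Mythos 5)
Your route is genuinely different from the paper's. The paper proves Theorem \ref{gap} by transplanting Anderson's compactness/blow-up scheme to translators: a compactness theorem for translating solitons with bounded $|\alpha|$ (Lemma \ref{compactness}), a Heinz-type scale-invariant estimate proved by rescaling and contradiction (Lemma \ref{ass1}), an $\epsilon$-regularity statement (Lemma \ref{ass2}), and then the gap theorem by applying the resulting pointwise bound at every scale, exactly as in \cite{A}; the only translator-specific input is that the rescaled immersions satisfy $\mathbb{M}(f)=\omega^\bot$ with $|\omega^\bot|\le 1$, so the elliptic estimates and compactness still apply. Your proposal is instead the direct analytic route (Simons inequality against a cutoff, Michael--Simon Sobolev, H\"older, absorption by the smallness of $\epsilon_0$), which the paper uses only for the subcritical exponents $p\in(1,n)$ in Theorem \ref{mali2}, and even there by reducing to Theorem \ref{gap}. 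Your approach, if completed, is more self-contained and gives a quantitative $\epsilon_0$; the paper's yields in addition the decay estimate of Proposition \ref{decay}.

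There is, however, one step that fails as written: the treatment of the mean curvature term in the Michael--Simon inequality. You bound $|\vec H|=|\omega^\bot|\le 1$ and replace $|\vec H|\,f$ by $f$. After squaring and H\"older this leaves on the right-hand side a term comparable to $\int\eta^2|\alpha|^n\,dv_g$ carrying \emph{no} factor of $\epsilon_0^{2/n}$ and no factor of $|\nabla\eta|$; it can neither be absorbed into the left-hand side for small $\epsilon_0$ nor made to vanish as $R\to\infty$, so the limiting argument does not close. (Your displayed final inequality, in which the $\int\eta^2|\alpha|^n$ term does carry the factor $\epsilon_0^{2/n}$, is not what the bound $|\vec H|\le 1$ produces.) The repair is to use the trace bound $|\vec H|=|\tr\alpha|\le\sqrt{n}\,|\alpha|$ instead: then the Michael--Simon error term becomes part of the critical quartic term $\int\eta^2|\alpha|^{n+2}$, which H\"older together with $\int_M|\alpha|^n\le\epsilon_0$ does absorb. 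The same bound handles the drift, since integrating $\langle\omT,\nabla|\alpha|^n\rangle\eta^2$ by parts produces $\di\omT=\langle\vec H,\omega^\bot\rangle=|\vec H|^2\le n|\alpha|^2$, again a critical term. Two minor points: the improved Kato constant is not needed here (plain Kato suffices, since all absorption is done by $\epsilon_0$, so your closing worry about codimension $k\ge 2$ is moot), and the case $n=2$ needs the $L^1$ form of Michael--Simon with separate exponent bookkeeping, since the $L^2$ Sobolev inequality degenerates there.
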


Here is the plan of the
paper.  In section \ref{sect2}, we propose some elementary properties of the
translating solitons and prove propositions 1 to 3.  Sections 3 and 4 are dedicated to prove technical lemmas that will be used to prove Theorem \ref{mali-VLC} in section 5. Section \ref{CGR} is dedicated to the proof of Theorem \ref{gap} and related results on the compactness of the space of translating solitons with bounded total curvature and a decay result.

\section{Elementary properties of translating solitons}\label{sect2}

From \eqref{soliton} it is easy to compute the Hessian of $S$ in $M$. If $X,Y$ are tangent vector fields on $M$ such that $\nabla_XY(x)=0$ at $x$, then
\begin{equation}\label{HesS}
\nabla^2 S(X,Y)(x)=<D_{X}Y,\omega>= -A(X,Y) <\nu,\omega>= -H\ A(X,Y).
\end{equation}

Denote by $\Delta$ the Laplacian operator on $M$,
\begin{equation}\label{SH}
\Delta S(x)=- H<\nu,\omega>=-H^2.
\end{equation}
From the definitions of $S$ and $\nabla$, and \eqref{HDS}, follows that
$$D_XS=<\omega,X>$$ for every vector $X$ in $\re^{n+1}$
and
\begin{equation}\label{DS1}
|\nabla S|^2=|\omega^T|^2, \ \ \ |DS|^2 = H^2+|\nabla
S|^2=|\omega|^2=1.
\end{equation}
 Then we
know that $M$ is non-compact. For otherwise, assuming $M$ is
compact, we know that $S(x)$
attains its minimum at some point $x_0\in M$, where  $\nabla S(x)=0$, which implies that $|H(x_0)|=1$, and $-H^2(x_0)=\Delta S(x_0)\geq 0$ and then we have $H(x_0)=0$, a contradiction. Hence, \emph{ $M$ is non-compact}.

From \eqref{SH} one immediately  gets  the conclusion of \textbf{ Proposition \ref{valencia}}. In fact, if $|\nabla S(x)|\in
L^1(M,dv_g)$, then for suitable $R\to\infty$ , $\int_{\partial B_R\bigcap M}|\nabla S|\to 0$. Then,
$$
\int_{B_R\bigcap M} H^2dv_g=-\int_{\partial B_R\bigcap M}<\nabla
S(x), \overline{\nu}>\leq \int_{\partial B_R\bigcap M}|\nabla S|\to 0.
$$
Then $H=0$. By the well-known monotonicity formula in the minimal surface theory \cite{CM}, we know that $vol (M,g)=\infty$.  By (\ref{DS1}), we have $|\nabla S(x)|=1$, which is impossible since $|\nabla S|\in L^1$.

{\bf Remark}. One consequence of Proposition \ref{valencia} is that $vol(M,g)=\infty$. One may just use the facts $\int_M|\nabla S(x)|=\infty$ and  $|\nabla S|\leq 1$. We actually have the following more refinement. Let $B_R(p)$ be the ball in $\re^{n+1}$ with center $p\in M$ and radius $R>0$. Then there is a positive uniform constant $C_0$ such that $vol(M\cap B_R(p))\geq C_0R$ for all $R>0$. For otherwise, we have some $R_j\to \infty$ such that
$$
\frac{vol(M\cap B_{R_j}(p))}{R_j}\to 0.
$$
Let $\phi_R$ be the cut-off function in $\re^{n+1}$ such that $0\leq \phi_R\leq 1$, $\phi_R=1$ on $B_R(p)$ and $\phi_R=0$ outside $B_{2R}(p)$, and $|D\phi_R|\leq 4/R$ in $\re^{n+1}$. By (\ref{SH}), we have
$$
\int_M H^2\phi_{R_j}=\int_M <\nabla S, \nabla \phi_{R_j}>\leq 4\frac{vol(M\cap B_{R_j}(p))}{R_j}\to 0
$$
as $R_j\to \infty$, which implies that $H=0$ on $M$, i.e., $M$ is a minimal submanifold. However, on the minimal submanifold $M$, we always have the volume growth $\frac{vol(M\cap B_{R}(p))}{R^n}\geq C_1>0$ for some positive uniform constant $C_1>0$. This completes the contradictory argument.

  \textbf{Here is the proof of Proposition \ref{valencia-422}}. We argue by contradiction, i.e., $m:=\inf_M S(x)>-\infty$. By the uniformization theorem, since $M$ is simply connected, it is conformal to the euclidean plane. Then by the equation (\ref{SH}), we know that $S(x)-m\geq 0$ is non-negative superharmonic function. Since there is no nontrivial non-negative superharmonic function on $R^2$, we know that $S(x)$ is a constant function on $M$. This implies that $|\nabla S|=0$ and $H=0$ (via the equation (\ref{SH})), however, these relations imply that $H^2+|\nabla
S|^2=0$, which is impossible by (\ref{DS1}). One
immediately obtains the conclusion of  Proposition \ref{valencia-422}.

\medskip
Now, we shall give \textbf{the proof of Proposition 3}.

\begin{proof}

Recall from \cite{CM} or \cite{Simon} that
$$
\Delta h_{ij}=H_{ij}+Hh_{im}h_{mj}-|A|^2h_{ij}.
$$
Note that $H_i=<D_{e_i}\nu, \omega>=h_{ij}<e_j,\omega^T>$. By the Codazzi equation $h_{ij,k}=h_{ik,j}$ (see also \cite{SSY})
$$
H_{ik}=H_{ij,k}<e_j,\omega^T>+h_{ij}<\nabla_{e_k}e_j,\omega^T>=\nabla_{\omega^T}h_{ik}-Hh_{ij}h_{jk}.
$$
Hence we have
\begin{equation}\label{Ht}
\Delta h_{ij}=\nabla_{\omega^T}h_{ij}-|A|^2h_{ij}.
\end{equation}

The latter equation can also be obtained from the flow equation in \cite{H}
$$
\partial_t A=\Delta A+|A|^2A
$$
where $\Delta$ is the induced Laplacian operator on the hypersurface
$F(x,t)$.

 We then derive from \eqref{Ht} the
following elliptic equation for the mean curvature function $H$ on $M$:
  \begin{equation}\label{mean}
\Delta H-\nabla_{\omega^T}H + |A|^2 H=0.
  \end{equation}
  which can be written as
  \begin{equation}\label{mean1p}
  LH = - |A|^2 H, \ \text{ with } Lf = \Delta f - \nabla_\omega^T f.
  \end{equation}
   The advantage of using the operator $L$ is that
    \begin{equation}\label{mean2}
\di(e^{-S}\nabla f) = L f \ e^{-S},
  \end{equation}
  which allows us to use the divergence theorem under the form:
  \begin{align}
  &\int_\Omega L f \ d\mu = - \int_{\partial\Omega} <N, \nabla f> \ d\mu \text{ and }\nonumber \\
  &\int_M g\  L f \mu = - \int_M <\nabla f, \nabla g> \ d\mu \text { for  }g\in C_o(M).
  \label{Smu}
  \end{align}
  Since $H\geq 0$, we know from the maximum principle that either $H=0$ on $M$ or $H>0$ on $M$. We show that $H=0$ on $M$. Assume that $H>0$.
 By \eqref{mean1p} and (\ref{Smu}) one immediately gets that
$$
\int_{B_R\bigcap M} |A|^2 H d\mu = \int_{\partial B_R\bigcap M}<\nabla
H(x),\overline{\nu}>\leq \int_{\partial B_R\bigcap M}|\nabla H|\to 0
$$
 for suitable $R\to\infty$. Then we have $H=0$, i.e., $M$ is a minimal hypersurface. Then we have
 $$
 \nabla S=\omega^T=\omega, \ \ \Delta S=0, \ on \ \ M.
 $$
 By (\ref{HesS}) we know that $\nabla^2S=0$ on $M$. This says that $\omega$ is a parallel vector on $M$, which implies the splitting result as desired.
\end{proof}
In principle, the proof of
Proposition \ref{valencia2} is similar to that of Proposition 1.

\section{Monotonicity for translating solitons and a consequence} \label{sect3}
In this section, we mainly set-up the monotonicity formula in Theorem \ref{mono}. Here is the proof.

Let $w(x)=e^{-S(x)}$ in $M$. Then using (\ref{SH}) and (\ref{DS1}), we have
$$
\Delta w=w, \ \ in \ \ M.
$$
Let $f(R)=\int_{B_R\bigcap M} w$. Let $h(x)=|x|$. Then $\nabla h(x)=\frac{x^T}{|x|}$ and $|\nabla h(x)|=\frac{|x^T|}{|x|}$. Recall the coarea formula
$$
\int_{B_t\bigcap M}w=\int^t d\tau\int_{h=\tau} \frac{w |x|}{|x^T|}.
$$
Then
\begin{equation}\label{vol-1}
f'(t)=\int_{\partial B_t\bigcap M} \frac{w |x|}{|x^T|}\geq \int_{\partial B_t\bigcap M}w.
\end{equation}
 Note that $\nabla w=-w\nabla S$.
Integrating the equation $\Delta_Mw =w$ over $B_R$ we obtain
\begin{equation}\label{vol-2}
f(R)=\int_{B_R\bigcap M}\Delta w=-\int_{\partial B_R\bigcap M} w<\nabla S, \bar{\nu}>\leq \int_{\partial B_t\bigcap M}w,
\end{equation}
where $\bar{\nu}=\frac{x^T}{|x^T|}$.
Then we have
  $$
 f'(t)\geq f(t),
 $$
 which implies the monotonicity formula that for any $t>s>0$,
 \begin{equation}\label{vol-3}
 e^{-t}f(t)\geq e^{-s}f(s).
 \end{equation}
 Then $f(t)\geq e^te^{-s}f(s):=c(s)e^t\to \infty$ as $t\to\infty$. Hence $\mu(M)=\infty$.
 Note that on $\partial B_t\bigcap M$, $w\leq e^R$ and by (\ref{vol-1}) we have
 $$
 f(t)\leq f'(t)\leq e^t Area(\partial B_t\bigcap M).
 $$
 Fixing some $s=s_0$, we know from above that
 $$
 Area(\partial B_t\bigcap M)\geq c(s_0).
 $$
Then we have $vol(B_t\bigcap M)\geq c(s_0)t$.
This completes \textbf{the proof of Theorem \ref{mono}}.

 We also note that
$$
(t^{-n}f(t))'=-nt^{-n-1} f(t)+t^{-n}f'(t).
$$
Then by (\ref{vol-1}) and (\ref{vol-2}) we get
$$
-nt^{-n-1} f(t)+t^{-n}f'(t)=t^{-n-1}[\int_{\partial B_t\bigcap M} \frac{w |x|^2}{|x^T|}+n
\int_{\partial B_t\bigcap M} w<\omega, \frac{x^T}{|x^T|}>]
$$
which is not strong enough to yield the monotonicity about $t^{-n}f(t)$.

\begin{proof} (\textbf{of Theorem \ref{mali-VL}}).
It is a consequence of Theorem \ref{mono} and the following lemma..

\begin{Lem}\label{vicente} Assume that the coordinate $x_{n+1}$ is in the direction of $\omega$ and $M$ is a $n$-dimensional translating soliton is given as the graph of  the graph of a function $u(x_1,...,x_n)$, and $|u| \le C$ for some constant $C$. Then
$$
\mu(M\bigcap B_R)\leq C_n e^C R^n
$$
where $C_n$ is the euclidean volume of the ball of radius $1$ in $\re^n$.
\end{Lem}
\begin{proof} Since we are considering on $M$ the measure $\mu$ defined by the measure element $d\mu=e^{-S} dv_g$, it is convenient to use on $\re^{n+1}$ the measure $\overline\mu$ defined by the measure element $d\overline \mu = e^{-S} dv_e$, where $dv_e$ is the standard euclidean volume element. Both  measures are related, as their corresponding volume elements, by  $d\mu = \iota_\nu d\overline \mu$. Since $M$ is the graph of a function $u$, we can extend $d\mu$ over all $\re^{n+1}$ by $ d\mu_{(x_1,...,x_n,x_{n+1})} = d\mu_{(x_1,...,x_n,u(x_1,...,x_{n}))}$. This has as a consequence that
\begin{align*}
d(d\mu)_{(x_1,...,x_n,x_{n+1})} &= d(d\mu)_{(x_1,...,x_n,u(x_1,...,x_{n}))} \\
&= d(\iota_\nu d\overline\mu) = \di_S\nu\ \overline\mu= (H+<\nu,\omega>) \overline \mu = 2 <\nu,\omega> \overline \mu.
\end{align*}
Given $R>0$ and a ball $B_R(p)$, let us choose the coordinate system centered at $p$. Let $D_R$ be the ball of radius $R$ in $R^n$. Let  $V$ be the domain in $\re^{n+1}$  bounded by $M$, $\partial D_R\times R$ and $D_R\times \{-C\}$.
The application of the Stokes theorem to the integration of the extended form $d\mu$  over the boundary $\partial V$ of $V$ gives
$$
\mu(M\bigcap D_R\times R)=\int_{M\bigcap \partial V} d\mu =-\int_{\partial V-M} d\mu +\int_V 2 <\nu,\omega> \overline \mu.
$$
Note that $V$ is contained in $D_R\times [-C,C]$,  $\partial V-M \subset D_R\times\{-C\} \bigcup \partial D_r\times [-C,C]$, and $|d\mu|$ restricted to $D_R\times\{-C\} \bigcup\partial D_r\times [-C,C]$ is lower or equal than $e^C |dv_g|$, where $dv_g$ is the standard volume element on  over that hypersurface.  Then we have that the right side of above equality can be bounded by
\begin{align*}
& e^C\left(vol(\partial D_R\times [-C,C])+vol(D_R\times
\{-C\})+vol(D_R\times [-C,C])\right)\\
& \qquad \qquad \leq e^C \left( 2\omega_{n-1}R^{n-1}C+\omega_nR^n+4\omega_nR^n C\right) \\
& \qquad \qquad = e^C (2 \omega_{n-1} C + \omega_n(1+4C) R) R^{n-1}.
\end{align*}
This completes the proof of Lemma \ref{vicente}. \end{proof}

Combining this result with Theorem \ref{mono}, we have Theorem \ref{mali-VL}.
\end{proof}

\section{Kato type inequality and its consequence}\label{pkato}

In this section we present the proof of Kato inequality for hypersurfaces with the condition \eqref{burjasso}.
 This follows by our attempt considering translating solitons following the ideas in \cite{SSY} for minimal surfaces. In that paper, a basic tool in the study of minimal hypersurfaces is Kato's inequality for the second fundamental form $
|\nabla A|^2 \geq (1+\frac{2}{n})|\nabla  |A||^2.
$. In this section we shall obtain a similar inequality for the traceless part $\Ao = A-\frac{H}{n}\ I$ of $A$, under the hypothesis \eqref{burjasso} (see also \cite{B}).

\begin{Lem} Let $M^n$ be a hypersurface of $\re^{n+1}$ satisfying \eqref{burjasso}, then \begin{equation}\label{kato}
|\nabla \Ao|^2 \geq (1+\frac{1}{n})|\nabla  |\Ao||^2.
\end{equation}
\end{Lem}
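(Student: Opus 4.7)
The plan is to work in the orthonormal principal basis $\{e_i\}$ at $x\in M$ provided by \eqref{burjasso}, set $T_{ijk}:=\nabla_k h_{ij}$ (totally symmetric in $i,j,k$ by the Codazzi equation for a hypersurface in $\re^{n+1}$), and decompose the relevant norms using this symmetry. Set
$$ P:=\sum_k T_{kkk}^2,\qquad S_1:=\sum_{i\neq k} T_{iik}^2,\qquad S_2:=\sum_{i,j,k \text{ pairwise distinct}} T_{ijk}^2. $$
A direct expansion using $\sum_i T_{iik}=\nabla_k H$ gives the identity $|\nabla \Ao|^2=|\nabla A|^2-\frac{1}{n}|\nabla H|^2$, and partitioning $|\nabla A|^2=\sum_{i,j,k} T_{ijk}^2$ according to how many of $i,j,k$ coincide (using the full symmetry of $T$) produces $|\nabla A|^2=P+3S_1+S_2$.

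Next I would bound $|\nabla|\Ao||^2$ from above. At the principal point, $\Ao$ is diagonal with eigenvalues $\mu_i=h_{ii}-H/n$ satisfying $\sum_i\mu_i=0$, hence $|\Ao|\nabla_k|\Ao|=\sum_i \mu_i T_{iik}$. Since $\sum_i \mu_i=0$ one may replace $T_{iik}$ by $T_{iik}-\tfrac{1}{n}\nabla_k H=\nabla_k\Ao_{ii}$ without changing the sum, and then the Cauchy--Schwarz inequality yields
$$ |\nabla|\Ao||^2\le\sum_{i,k}\Big(T_{iik}-\tfrac{1}{n}\nabla_k H\Big)^2=P+S_1-\tfrac{1}{n}|\nabla H|^2. $$

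The heart of the argument is to convert the hypothesis \eqref{burjasso}, which in this notation reads $\sum_j T_{jjj}\,\nabla_j H\le\frac{n+1}{2n}|\nabla H|^2$, into an upper bound for $P$. Writing $\nabla_j H=T_{jjj}+\sum_{i\neq j}T_{iij}$ and using the identity
$$ 2\sum_j T_{jjj}\sum_{i\neq j}T_{iij}\;=\;|\nabla H|^2-P-\sum_j\Big(\sum_{i\neq j}T_{iij}\Big)^2, $$
the hypothesis simplifies to $P\le\tfrac{1}{n}|\nabla H|^2+\sum_j\big(\sum_{i\neq j}T_{iij}\big)^2$, and a further Cauchy--Schwarz on the inner sum gives $P\le\tfrac{1}{n}|\nabla H|^2+(n-1)S_1$.

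Assembling the three estimates yields
$$ |\nabla\Ao|^2-\Big(1+\tfrac{1}{n}\Big)|\nabla|\Ao||^2\;\ge\;\Big(2-\tfrac{1}{n}\Big)S_1+S_2-\tfrac{1}{n}P+\tfrac{1}{n^2}|\nabla H|^2\;\ge\;S_1+S_2\;\ge\;0, $$
which is exactly \eqref{kato}. The main obstacle is the delicate algebraic reduction in the third paragraph in which the constant $\tfrac{n+1}{2n}$ appearing in \eqref{burjasso} conspires with the Codazzi-symmetry bookkeeping to produce precisely the coefficient $1+\tfrac{1}{n}$; the remainder is a combination of standard Cauchy--Schwarz estimates refined by $\tr\Ao=0$.
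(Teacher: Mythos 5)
Your proof is correct: I checked the partition $|\nabla A|^2=P+3S_1+S_2$ (which uses the total symmetry of $T_{ijk}=\nabla_k h_{ij}$ coming from Codazzi in flat ambient space), the identity $|\nabla \Ao|^2=|\nabla A|^2-\tfrac1n|\nabla H|^2$, the Cauchy--Schwarz bound $|\nabla|\Ao||^2\le P+S_1-\tfrac1n|\nabla H|^2$ (which is where $\tr\Ao=0$ enters), and the reformulation of \eqref{burjasso} as $P\le\tfrac1n|\nabla H|^2+(n-1)S_1$; the assembly indeed yields $|\nabla \Ao|^2-(1+\tfrac1n)|\nabla|\Ao||^2\ge S_1+S_2\ge 0$. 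The strategy is the same as the paper's --- a pointwise computation in the principal frame supplied by \eqref{burjasso}, combining Codazzi symmetry, Cauchy--Schwarz, and the hypothesis to control the diagonal derivatives --- but the bookkeeping is genuinely different. The paper expands $\nabla\ho$ against an orthonormal basis of the space of trace-free symmetric $2$-tensor--valued $1$-forms, keeps the component along $\ho/|\ho|$ (which is exactly $|\nabla|\Ao||$) together with the off-diagonal components $\nabla_i\ho_{jk}$, $j\ne k$, uses \eqref{burjasso} to prove $\sum_{j\ne k}(\nabla_j\ho_{jk})^2\ge\sum_{j\ne k}(\nabla_k\ho_{jj})^2$, and separately shows $|\nabla|\Ao||^2\le n\sum_{k\ne j}(\nabla_k\ho_{jj})^2$, which is where the extra $\tfrac1n|\nabla|\Ao||^2$ comes from. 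You instead partition $\sum_{i,j,k}T_{ijk}^2$ by the coincidence pattern of the indices and let the hypothesis absorb the fully diagonal block $P=\sum_j T_{jjj}^2$. Your version is arguably cleaner: it avoids the paper's informal ``complete the basis with tensors $T^\ell$'' step (and sidesteps the coefficient in \eqref{norm}, which should be $2$ rather than $\tfrac12$ for the claimed identity $\sum_{i,\,j<k}2(\nabla_i\ho_{jk})^2=\sum_{i,\,j\ne k}(\nabla_i\ho_{jk})^2$ to hold), and it makes transparent exactly how the constant $\tfrac{n+1}{2n}$ is consumed, namely in cancelling $-\tfrac1nP$ against $\tfrac{n-1}{n}S_1+\tfrac1{n^2}|\nabla H|^2$. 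The only caveat, shared with the paper, is that $|\Ao|$ is not differentiable where it vanishes, so \eqref{kato} is to be read at points where $\Ao\ne 0$ (or in the a.e.\ sense), which is all that is needed in the later applications.
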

\begin{proof}
We shall
identify $M$ and $F(M)$. For any $p\in M$, we choose a local orthonormal frame
$\{e_1,..., e_n\}$ such that, at $p$,
$\nabla_{e_i}e_j (p)=0$ and $\{e_1(p), ... , e_n(p)\}$ are eigenvectors of the shape operator $A$, that is, $A e_i = k_i e_i$. Let us remark that this implies that $\Ao e_i = \displaystyle \left(k_i - \frac{H}{n}\right) e_i$, that is, $\{e_i\}$ are also eigenvectors of $\Ao$.

We can choose the $\{e_i\}$ such that, at $p$, coincide with those in the hypothesis \eqref{burjasso}.

At $p\in M$, $\nabla\ho$ is in the vector space of $3$-covariant tensors $T$ on $T_pM$ satisfying the Codazzi condition $\sum_i T(X,e_i,e_i)=0$ for every $X\in T_pM$. If we consider on this space the natural metric induced by $<,>$, we can choose an orthonormal basis of it formed by the tensors
$$ \theta^i \otimes \frac{\ho}{|\ho|}, \ \theta^i\otimes S^{jk}, \ \theta^i\otimes T^{\ell} , \ 1\le i\le n, 1\le j<k\le n, 2\le \ell \le n-1,
$$
where $\{\theta^i\}$ is the dual basis of $\{e_i\}$, $S^{jk}$ are the $2$-covariant symmetric tensors whose matrix in the basis $\{e_i\}$ is $ \displaystyle S^{jk} = \left(S^{jk}_{rs}\right)_{1\le r,s \le n} = \frac1{\sqrt{2}}\left(\delta_r^j \delta_s^k + \delta_s^j \delta_r^k\right)_{1\le r,s \le n}$, and $T^\ell$ are other $2$-covariant tensors necessary to complete the basis. Using this basis we can write
\begin{align} \label{norm}
|\nabla\Ao|^2 &= |\nabla\ho|^2 \\ &= \sum_{i} <\nabla_i\ho, \frac{\ho}{|\ho|}>^2 + \sum_{i,j<k} \frac12 (\nabla_i\ho_{jk})^2+ \sum_{i,\ell} <\nabla_i\ho, T^\ell>^2\nonumber
\end{align}
But we have the following expressions for the first and second summands in the last term of the above equalities
\begin{equation}\label{norm1}
\sum_{i} <\nabla_i\ho, \frac{\ho}{|\ho|}>^2 = |\nabla |\ho||^2 = |\nabla |\Ao||^2
\end{equation}
\begin{align}\label{norm2}
\sum_{i,j<k} \frac12 (\nabla_i\ho_{jk})^2 &= \sum_{i,j\ne k}  (\nabla_i\ho_{jk})^2 \ge \sum_{j\ne k}  (\nabla_j\ho_{jk})^2 \nonumber \\
& = \sum_{j\ne k}  (\nabla_jh_{jk} -\frac1n \nabla_jH \delta_{jk})^2 = \sum_{j\ne k}  (\nabla_kh_{jj})^2  \nonumber \\
&= \sum_{j\ne k}  (\nabla_k\ho_{jj} + \frac{1}{n}\nabla_kH g_{jj})^2 \nonumber \\
&=\sum_{j\ne k}  (\nabla_k\ho_{jj})^2 + \frac{n-1}{n^2}|\nabla H|^2 + \frac2n \sum_{j\ne k}\nabla_k H \nabla_k\ho_{jj}   \nonumber \\
&=\sum_{j\ne k}  (\nabla_k\ho_{jj})^2 + \frac{n-1}{n^2}|\nabla H|^2 + \frac2n |\nabla H|^2 - \frac2n\sum_{j}\nabla_j H \nabla_j\ho_{jj}
\end{align}
where we have used the Codazzi equation for the third equality. Now, let us observe that \eqref{burjasso} implies that
$$\sum_{j}\nabla_j H \nabla_j\ho_{jj} \le \frac{3n-1}{2 n}|\nabla H|^2, $$
then, by substitution of this inequality in \eqref{norm2}, we obtain
\begin{align}\label{norm3}
\sum_{i,j<k} \frac12 (\nabla_i\ho_{jk})^2 &= \sum_{i,j\ne k}  (\nabla_i\ho_{jk})^2 \ge \sum_{j\ne k}  (\nabla_j\ho_{jk})^2 \nonumber \\
& \ge \sum_{j\ne k}  (\nabla_k\ho_{jj})^2 .
\end{align}
Moreover, since $\ho$ is diagonal in the basis $\{e_i\}$,
\begin{align}\label{norm4}
|\nabla |\Ao||^2 &= \sum_{k} <\nabla_k\ho, \frac{\ho}{|\ho|}>^2 = \sum_{k} <\nabla_k\ho, \frac{\ho}{|\ho|}>^2 \nonumber \\
&= \frac1{|\ho|^2} \sum_{k} (\sum_j \nabla_k\ho_{jj}  \ho_{jj})^2 \nonumber \\
&\le  \frac1{|\ho|^2}\sum_{k} (\sum_j (\nabla_k\ho_{jj})^2) (\sum_j ( \ho_{jj})^2) = \sum_{k,j} (\nabla_k\ho_{jj})^2 \nonumber \\
&=\sum_{k\ne j} (\nabla_k\ho_{jj})^2 + \sum_{j} (\nabla_j\ho_{jj})^2 \nonumber \\
&=\sum_{k\ne j} (\nabla_k\ho_{jj})^2 + \sum_{j} (\sum_{i\ne j}\nabla_i\ho_{jj})^2 \nonumber \\
&\le\sum_{k\ne j} (\nabla_k\ho_{jj})^2 + \sum_{j} (n-1)  \sum_{i\ne j}(\nabla_i\ho_{jj})^2 = n \sum_{k\ne j} (\nabla_k\ho_{jj})^2
\end{align}

From \eqref{norm}, \eqref{norm3} and \eqref{norm4}, we obtain \eqref{kato}.
\end{proof}
As a corollary, we obtain

\begin{Lem}\label{simons} Let $M^n$ be a translating soliton of $\re^{n+1}$ satisfying \eqref{burjasso}, then \begin{equation}\label{katos2}
|\Ao |L |\Ao |+|A|^2|\Ao |^2\geq \frac{1}{n}|\nabla
|\Ao ||^2.
\end{equation}
\end{Lem}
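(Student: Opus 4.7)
The plan is to establish a drift-Simons identity on the translating soliton in the clean form $L h_{ij} = -|A|^2 h_{ij}$, pass to the traceless part, and then read off \eqref{katos2} via the Kato-type estimate \eqref{kato} of the previous lemma.

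For the identity, I would start from the classical Simons formula $\Delta h_{ij} = \nabla_i\nabla_j H + H h_{ik}h_{kj} - |A|^2 h_{ij}$, valid for any hypersurface in $\re^{n+1}$. To handle the drift term $\nabla_{\omT}h_{ij}$, I would differentiate $H=\langle \nu,\omega\rangle$ to obtain $\nabla_i H = -h_{ij}\omT_j$, equivalently $\omT^k h_{kj} = -\nabla_j H$; and decompose $D_{e_i}\omT = D_{e_i}\omega - D_{e_i}(H\nu)$ to read off $\nabla_i\omT_j = H h_{ij}$. Using the Codazzi equation $\nabla_k h_{ij} = \nabla_i h_{jk}$ and the product rule,
\begin{equation*}
\nabla_{\omT}h_{ij} \ =\ \omT^k\nabla_i h_{jk} \ =\ \nabla_i(\omT^k h_{jk}) - (\nabla_i\omT^k) h_{jk} \ =\ -\nabla_i\nabla_j H - H h_{ik}h_{kj},
\end{equation*}
which, added to the Simons formula, produces the desired $L h_{ij} = -|A|^2 h_{ij}$.

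Combining this with $LH=-|A|^2H$ from \eqref{mean1p} and subtracting $\frac{1}{n}LH\,\delta_{ij}$ yields $L\ho_{ij} = -|A|^2 \ho_{ij}$. Contracting with $\ho^{ij}$ and using $\tfrac12 L|\ho|^2 = \ho^{ij}L\ho_{ij} + |\nabla \ho|^2$, together with the scalar product rule $\tfrac12 L(f^2) = f L f + |\nabla f|^2$ applied to $f = |\ho|$, I obtain
\begin{equation*}
|\ho|\,L|\ho| + |A|^2 |\ho|^2 \ =\ |\nabla\ho|^2 - |\nabla |\ho||^2.
\end{equation*}
The Kato inequality \eqref{kato} then gives $|\nabla\ho|^2 - |\nabla|\ho||^2 \geq \frac{1}{n}|\nabla|\ho||^2$, which is precisely \eqref{katos2}. (On $\{|\ho|=0\}$ the inequality is trivial, so one works on the open set where $|\ho|>0$.)

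The only delicate step is the drift-Simons identity: the terms $\nabla_i\nabla_j H$ and $Hh_{ik}h_{kj}$ appearing on the right of Simons' formula are exactly what the drift derivative $\nabla_{\omT} h_{ij}$ cancels, the cancellation being forced by the two "soliton derivatives" $\nabla H = -A\omT$ and $\nabla \omT = HA$. Once the clean identity $Lh_{ij}=-|A|^2 h_{ij}$ is in place, the remainder is a routine Kato-type combination in the spirit of \cite{SSY}, and no additional use of \eqref{burjasso} is needed beyond the Kato inequality already established.
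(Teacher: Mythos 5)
Your proof is correct, and it reaches the same pivotal identity as the paper --- namely $|\Ao|L|\Ao|+|A|^2|\Ao|^2=|\nabla\Ao|^2-|\nabla|\Ao||^2$, which combined with the Kato-type inequality \eqref{kato} gives \eqref{katos2} --- but you derive that identity by a genuinely different route. The paper imports Huisken's parabolic evolution equation $\partial_t|A|^2=\Delta|A|^2-2|\nabla A|^2+2|A|^4$ and uses the fact that on a translator $\partial_t=-\nabla_{\omT}$, obtaining the scalar equation $L|A|^2=2|\nabla A|^2-2|A|^4$, from which it subtracts $\tfrac1n L(H^2)$ (computed from \eqref{mean1p}) to get $L|\Ao|^2=2|\nabla\Ao|^2-2|A|^2|\Ao|^2$. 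You instead perform a purely static, intrinsic computation: the classical Simons identity plus Codazzi plus the two soliton derivative relations $\nabla H=-A\omega^\top$ and $\nabla_i\omega^\top_j=Hh_{ij}$ (the latter is exactly \eqref{HesS}) yield the tensorial drift-Simons equation $Lh_{ij}=-|A|^2h_{ij}$, hence $L\ho_{ij}=-|A|^2\ho_{ij}$, and contraction gives the same scalar identity. Your version is self-contained (no appeal to the mean curvature flow machinery of \cite{H}) and actually proves more, since the pointwise tensor identity $Lh_{ij}=-|A|^2h_{ij}$ is stronger than its norm-squared trace; the paper's route is shorter if one takes the evolution equations as known. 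You are also right that \eqref{burjasso} enters only through \eqref{kato}, exactly as in the paper, and your caveat about working on the open set where $|\Ao|>0$ (where $|\Ao|$ is smooth) applies equally to the paper's argument.
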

\begin{proof}
From (\ref{Ht}), we have
\begin{equation*}
 \Delta |A|^2=\nabla_{\omega^T}|A|^2+2 |\nabla A|^2 - 2 |A|^4
\end{equation*}
We may also think about the flow $F(x,t) = F_0(x) - t\ \omega$ for the translating soliton, we have  $\partial_t|A|^2(x,0)= - \nabla_{\omega^T}|A|^2 (x)$ for every $x\in M$.
Hence we have
\begin{equation}\label{key}
\Delta |A|^2-\nabla_\omega |A|^2=2|\nabla A|^2-2|A|^4.
\end{equation}
Note that
$$
|\Ao|^2 = |A - \frac H n I|^2 = |A|^2 -\frac {H^2} n.
$$
Using \eqref{mean} and \eqref{key}, we obtain the following formula
\begin{equation}\label{key2}
\Delta |\Ao|^2-\nabla_{\omega^T} |\Ao|^2 = 2 |\nabla \Ao|^2 - 2|A|^2 |\Ao|^2.
\end{equation}
Then \eqref{katos2} follows from this equality and inequality \eqref{kato}.
\end{proof}

{\bf Some remarks on condition \eqref{burjasso}}.This condition appears as a necessary technical condition to have the Kato's type inequality \eqref{kato} for the tensor $\Ao$. Here we want to grasp a little this condition by looking at its meaning in two simple families of hypersurfaces.

The first are surfaces $\Gamma\times \re^{n-1}$, with $\Gamma$ a curve in $\re^2$. If we denote by $e_1$ the unit vector tangent to $\Gamma$, any orthonormal basis used to write condition \eqref{burjasso} contains $e_1$, and $H= k$, the curvature of the curve $\Gamma$ in $\re^2$. Then condition \eqref{burjasso} just states $|D_{e_1}k|^2 \le \frac{3n+1}{2 n} |D_{e_1}k|^2$, which is always true.

The second family to consider are revolution surfaces obtained by the rotation of a curve $c(s)=(x_1(s), x_{n+1}(s))$, parametrized respect its arc length $s$,  in the plane $\{x_1, x_{n+1}\}$ around the axis $X_{n+1}$. In this case we are forced to take as one the the vectors (say $e_1$) of the orthonormal basis the unit vector tangent to $c(s)$ (or those obtained by rotation of it), because this curve is a curvature line. For such a revolution surface, $h_{11} = k$, the curvature of $c$, and, for $j\ne 1$, $$\displaystyle h_{jj} = \frac1{x(s)}<(-1,0),(-x_{n+1}'(s), x_1'(s))> =\frac{x_{n+1}'(s)}{x(s)}.$$ Both curvatures have derivative $0$ in the directions $e_j$, $j\ge 2$, then a stronger condition than \eqref{burja} is that
$$\displaystyle (D_{e_1}k) D_{e_1}(k + (n-1) \frac{x_{n+1}'(s)}{x(s)}) \le \frac{n+1}{2 n}(D_{e_1}(k + (n-1) \frac{x_{n+1}'(s)}{x(s)}))^2,$$ that is
$$\displaystyle \frac{n-1}{2 n} k'(s)^2 \le \frac{n-1}{n} k'(s) \left(\frac{x_{n+1}'}{x}\right)'(s) + \frac{n+1}{2 n} \left(\frac{x_{n+1}'}{x}\right)'(s)^2.$$ Obviously this inequality is satisfied when $c$ is a line or a circle. Explicit computations with concrete functions show that it is satisfied, for instance, when $c$ is the graph of the functions $x_{n+1}(x_1)= \sqrt{x_1}$,  $x_1^2$ for $n\le 7$, $\sinh(x_1)$, $\cosh(x_1)$.


\section{proof of Theorem \ref{mali-VLC} and related} \label{sect5}

In this section we give a basic analytic lemma which will be used to give a
Bernstein type theorem.

 \begin{Lem}\label{mali8} Assume that $M^n\subset \re^{n+1}$ is a
  hypersurface such that there are positive functions $u>0$ and $B>0$ on $M$ such that
  \begin{equation}\label{simons8}
uLu+B u^2\geq c_0 |\nabla u|^2
  \end{equation}
for some uniform constant $c_0>0$ and with the \emph{stability} condition
\begin{equation}\label{stable8}
\int_M(|\nabla \phi|^2- B \phi^2)\ d\mu \geq 0
\end{equation}
for
  any $\phi\in C_0^2(M)$.
 Assume that $B \geq b\ u^2$ for some constant
$b>0$. Then, there is a uniform constant $C>0$, for
  any $\eta\in C_0^2(M)$,
  \begin{equation}\label{interm-1}
\int |\nabla u|^2\eta^2\leq C \int u^{2}|\nabla \eta|^2,
\end{equation}

  moreover, there is a small $\varepsilon >0$ such that for every
  $$\displaystyle p\in\left]4-2\sqrt{\frac{c_0}{1+\varepsilon}} , 4+2\sqrt{\frac{c_0}{1+\varepsilon}}\right[,
  $$ there is a constant $C(n,p)$  such that
\begin{equation}\label{moser}
\int_M u^{p}\eta ^{p} d\mu\leq C(n,p) \int_M (|\nabla
\eta|^{p})d\mu
\end{equation}
\end{Lem}

\begin{proof} We shall follow an idea from \cite{SSY}. In below, all the
integrals are along $M$ and  with respect to the measure $d\mu$. Moreover we shall use the divergence formulas \eqref{Smu}.

Let us take $\phi=u^{1+q}\eta$ in the stability inequality
(\ref{stable8}). We get that
\begin{align}
\int Bu^{2+2q}\eta^2\leq & \int
(1+q)^2u^{2q}|\nabla u|^2\eta^2 + \int u^{2+2q}|\nabla \eta|^2 \nonumber \\
& \quad +2(1+q)\int u^{1+2q}\eta\ <\nabla u,\nabla \eta>. \label{stable2} \\
& = \int u^{2q} |(1+q) \eta \nabla u + u \nabla \eta|^2 \label{stable3}
\end{align}
Multiplying (\ref{simons8}) by $\eta^2 u^{2q}$ and integrating over
$M$ using the divergence formula, having into account that $\eta$ has compact support, we get that
$$
\int c_0  u^{2q}\eta^2|\nabla u|^2\leq -(1+2q)\int
u^{2q}|\nabla u|^2\eta^2+ \int B u^{2+2q}\eta^2
$$
$$
-2\int  u^{1+2q} \eta <\nabla u,\nabla \eta>.
$$
Applying (\ref{stable2}) to the above inequality we have
$$
\int c_0  u^{2q}\eta^2|\nabla u|^2\leq q^2\int u^{2q}|\nabla
u|^2\eta^2+ 2q\int  u^{1+2q} \eta<\nabla u,\nabla \eta> + \int u^{2q} |q \eta \nabla u + u \nabla \eta|^2
$$

 Using the
Cauchy-Schwartz inequality and Young's inequality
$$
2\int  u^{1+2q} \eta<\nabla u,\nabla \eta>\leq\epsilon q^2\int
\eta^2 u^{2q}|\nabla u|^2+\epsilon^{-1}\int  u^{2+2q}|\nabla
\eta|^2,
$$
we get that
\begin{align}\label{key5}
[c_0-(1+\epsilon) q^2]  \int  u^{2q}|\nabla  u|^2 \eta
\leq (1+\epsilon^{-1})\int  u^{2q+2 } |\nabla\eta|^2.
\end{align}

Let us take $p=2q+4>0$ (which implies $p>4$ in order $q>0$)  in (\ref{stable2}) and $q^2<c_0$. Choose $\epsilon>0$
small such that $c_0-(1+\epsilon)q^2>0$. Then
\begin{equation}\label{interm}
\int u^{p-4}|\nabla u|^2\eta^2\leq C \int u^{p-2}|\nabla \eta|^2
\end{equation}
for some constant $C>0$. Choose $p=4$ and we get (\ref{interm-1}).

Using the fact $B\ge b u^2$,  \eqref{stable3} and the inequality $|x+y|^2 \le 2|x|^2 + 2 |y|^2$,
\begin{align*}
b \ \int u^p \eta^2 \le \int u^{p-2} B \eta^2 \le u^{p-4} |(1+q)\eta \nabla u + u \nabla \eta|^2  \\
\le \int 2 u^{p-4} \left( (1+q)^2 \eta^2 |\nabla u|^2 + u^2 |\nabla\eta|^2\right)
\end{align*}
Applying now \eqref{interm}, we obtain
\begin{equation}\label{ecu}
b \ \int u^p \eta^2\ \le\ 2\ C\ ((1+q)^2 +1) \int u^{p-2} \nabla \eta|^2
\end{equation}
Now we use the the Young's inequality
$$
 u^{p-2}|\nabla \eta|^2 = u^{p-2} \eta^{2(p-2)/p} \frac{|\nabla \eta|^2}{\eta^{2(p-2)/p}} \leq \delta  u^p \eta^2 + C_\delta  \frac{|\nabla \eta|^p}{\eta^{p-2}},
$$
and the substitution of this inequality in \eqref{ecu} gives
$$
\int  u^p\eta^2\leq C_1\int (|\nabla \eta|^p\eta^{2-p})
$$
for some constant $C_1$. Replacing $\eta$ by $\eta^{p/2}$ we then get (\ref{moser}).
\end{proof}

\begin{proof} {\it of Theorem \ref{mali-VLC}}.
 We plan to show that under the assumption $H\geq 0$, we must have
$\Ao =0$, which implies that $M$ is umbilical everywhere on $M$
and, because $M$ is complete and non compact, this tells that $M$ is  a hyperplane.

Translating solitons are critical points of the volume functional $V_S(\Omega) = \int_\omega d\mu$, $\Omega\subset M$. A computation of  the second variation formula similar to which is done for minimal surfaces shows that a translating soliton $M$ is stable for the functional $V_s$ if and only if, for any $\phi\in C_0^2(M)$,
\begin{equation}\label{stable}
\int_M(|\nabla \phi|^2-|A|^2\phi^2)\ d \mu\geq 0.
\end{equation}

  We remark that for any nontrivial mean convex soliton, that is, $H\geq 0$, by  maximum principle applied to \eqref{mean} we must have $H>0$. This then implies the stability condition \eqref{stable}, as follows   by an standard argument (for instance, see the argument in pages 46-47 of \cite{CM} for minimal submanifolds)

Then, the function $B=|A|^2$ satisfies \eqref{stable8}. Moreover, by the hypothesis \eqref{burjasso}  and Lemma \ref{simons}, $u=|\Ao|$ satisfies \eqref{katos2}, that is, satisfies \eqref{simons8} with $c_0=1/n$, and $B \ge b u^2$ with $b$=1. Then we can apply the Basic Lemma \ref{mali8} to conclude \eqref{interm-1}.

Let  $r(x)$ denote  the distance from $x$ to the origin in
$\re^{n+1}$. In \eqref{interm-1}, choose $\eta=\eta(r(x))$ to be the cut-off
function defined by
\begin{equation}\label{eta}
\eta(r)=1, \ \text{if}  \ r\leq \theta R; \   \eta(r)=0, \  \text{for} \
r\geq R, \text{ and linear for  }r\in [\theta R,R]
\end{equation}
where  $\theta\in (0,1)$ is any fixed constant.

Then we can apply again the the computations in the proof of Lemma \ref{mali8}, with the function $\eta$  defined in \eqref{eta} and $q=0$ in (\ref{key5}) to obtain
$$
\int_{B_{R/2}}|\nabla|\Ao ||^2d\mu\leq cR^{-2}
\int_M|\Ao |^2d\mu
$$
which goes to $0$ as $R\to\infty$ because of the hypothesis \eqref{A0L1}. Hence we have
$$
\nabla|\Ao |=0.
$$
Then $|\Ao|$ is constant. This and  $\mu(M)=\infty$ tell us that that $\Ao=0$. Then  $M$ is a hyperplane.
\end{proof}

\section{compactness and Gap results}\label{CGR}

The purpose of this section is to prove Theorem \ref{gap}. This result is the analog, for translating solitons, of Corollary 2.3 in \cite{A} for minimal submanifolds of $\re^n$. The proof of it and the necessary preliminary results follow the same arguments than in that paper and, some parts, in \cite{CS}. Then we'll indicate only the points where the condition of being minimal is used in \cite{A} or \cite{CS} and how things still work when $\vec{H}=0$ is changed by condition \eqref{solitonhc}.

The proof of Theorem \ref{gap} relies on the following sequence of lemmas, where $N=n+k$.

\begin{Lem}[Compactness Theorem]\label{compactness} Let $\{M_j^n\}$ be a sequence of
connected translating solitons in $B^{N}(1)$ such that $\partial
M_j^n\bigcap B^N(1)=\varnothing$. Suppose that there is an uniform
constant $C>0$ such that $\sup |\alpha_j|(x)\leq C$ for all $j$. Then
there is a subsequence of $(M_j)$, still denoted by $(M_j)$ that
converges in the $C_{loc}^\infty$ norm to a smooth translating
soliton $M_\infty$ in $B^N(1)$ with $\sup |\alpha_\infty|(x)\leq C$.
\end{Lem}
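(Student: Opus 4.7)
The plan is to reduce the statement to a standard quasilinear elliptic compactness argument by representing each $M_j$ locally as a graph over its tangent plane and bootstrapping regularity from the soliton equation $\vec{H}=\omega^{\bot}$. To start, for each $j$ and each $p\in M_j\cap B^N(1/2)$, I would perform an orthogonal change of coordinates sending $p$ to the origin and $T_pM_j$ to $\re^n\times\{0\}\subset\re^{N}$. The uniform bound $|\alpha_j|\le C$ then produces a radius $r_0=r_0(C)>0$, independent of $j$ and $p$, such that the connected component of $M_j\cap B^N(p,r_0)$ through $p$ is the graph of a smooth function $u_j:\Omega_j\subset\re^n\to\re^k$ with $u_j(0)=0$, $Du_j(0)=0$ and $|D^2 u_j|\le C$. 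The boundary hypothesis $\partial M_j\cap B^N(1)=\varnothing$ is what allows that graph to extend all the way out to the boundary of $B^N(p,r_0)$ rather than terminating earlier.

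Next, I would translate \eqref{solitonhc} into a PDE for $u_j$. In the graphical gauge, $\vec{H}_j$ is a quasilinear second order operator in $u_j$ whose principal symbol is uniformly elliptic as long as $|Du_j|$ stays bounded and whose coefficients depend smoothly on $u_j$ and $Du_j$, while the projection $\omega^{\bot}$ onto the normal bundle of the graph also depends smoothly on $u_j$ and $Du_j$ only. Hence \eqref{solitonhc} becomes a quasilinear elliptic system
\begin{equation*}
a^{\alpha\beta}(Du_j)\,D_\alpha D_\beta u_j = F(u_j,Du_j,\omega),
\end{equation*}
and the uniform $C^2$ bound on $u_j$ obtained above already gives uniform $C^{0,\alpha}$ bounds on both the coefficients and the right hand side on any interior subdomain.

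From here the argument is the standard Schauder bootstrap: the $C^{0,\alpha}$ bounds yield uniform $C^{2,\alpha}$ bounds on a slightly smaller ball, and differentiating the equation and reapplying Schauder produces uniform $C^{k,\alpha}$ bounds for every $k$. Arzel\`a--Ascoli then provides a subsequence with $u_j\to u_\infty$ in $C^\infty_{\text{loc}}$, and $u_\infty$ satisfies the same quasilinear equation, so its graph is a smooth translating soliton whose second fundamental form satisfies $|\alpha_\infty|\le C$ pointwise as a limit of $|\alpha_j|$. To upgrade this local statement into convergence on all of $B^N(1)$, I would cover $B^N(1)$ by a countable dense family of such base points, apply the local argument at each, and extract a diagonal subsequence, verifying that the local graphical pieces match up on overlaps since they all come from the same sequence $M_j$.

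The main obstacle I expect is the first step: namely, showing that the graphical radius $r_0$ and the $C^2$ bound on $u_j$ can be chosen \emph{uniform} in $j$ and in the base point $p$. This is exactly where the curvature bound $|\alpha_j|\le C$ enters, through the standard fact that a submanifold with bounded second fundamental form cannot deviate appreciably from its tangent plane on scales $\lesssim 1/C$, but it must be combined with the hypothesis $\partial M_j\cap B^N(1)=\varnothing$ to rule out the graph terminating before reaching $\partial B^N(p,r_0)$.
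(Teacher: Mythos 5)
Your argument is correct and follows essentially the same route as the paper: the paper simply cites the compactness theorems for minimal submanifolds of Anderson, Choi--Schoen and Langer and observes that the only point where minimality enters is that the local graphical equation $\mathbb{M}(f_j)=0$ must be replaced by $\mathbb{M}(f_j)=\omega_j^\bot$, which is still a quasilinear elliptic system with bounded right-hand side. You have merely written out the standard details (uniform graphical radius from the curvature bound together with $\partial M_j\cap B^N(1)=\varnothing$, Schauder bootstrap, Arzel\`a--Ascoli and a diagonal extraction) that the paper delegates to those references.
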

This Lemma is stated and proved  for minimal submanifolds (in different situations) in \cite{A}, \cite{CS} and \cite{Lan}. The proof of \cite{A} and \cite{CS} works also for translating solitons. In fact, the condition $H=0$ is used is to state that, locally, $M_j$ can be written as a graph of a function $f_j$ satisfying the elliptic equations system $\mathbb{M}(f_j)=0$, where $\mathbb M$ is the operator giving the mean curvature of the graph of $f_j$. For translating solitons we only need to change this equation by $\mathbb{M}(f_j)= \omega_j^\bot$, which is still an elliptic system, with $|\omega^\bot| \le 1$,  and the rest of the arguments is like in the quoted papers.

We can get the following two assertions. The first one is the Heinz
type estimate.

\begin{Lem}\label{ass1} There is a constant $\epsilon_0>0$ such that if $F:M\to
R^{N}$ is a translating soliton with $D_1(p)\cap
\partial M=\varnothing$ for some $p\in M$ and with
$$
\int_{D_1(p)}|\alpha|^n dv_g\leq \epsilon_0,
$$
then
\begin{equation}\label{A1}
\sup_{s\in [0,1]}[s^2 \sup_{D_{1-s}(p)}|\alpha|^2]\leq 4.
\end{equation}
\end{Lem}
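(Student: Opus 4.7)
The plan is to argue by contradiction via a blow-up / point-picking argument in the spirit of Choi--Schoen and Anderson, adapted to the non scale-invariant soliton equation. Assume the conclusion fails: for each $k\in\mathbb{N}$ there is a translating soliton $F_k:M_k\to\re^{N}$ with $D_1(p_k)\cap\partial M_k=\varnothing$ satisfying $\int_{D_1(p_k)}|\alpha_k|^n\,dv_{g_k}\le 1/k$ and $\sup_{s\in[0,1]} s^2\sup_{D_{1-s}(p_k)}|\alpha_k|^2>4$. First I would carry out the standard point-picking step: let $\sigma_k\in(0,1]$ be a near-maximizer of $s\mapsto s^2\sup_{D_{1-s}(p_k)}|\alpha_k|^2$, choose $q_k\in\overline{D_{1-\sigma_k}(p_k)}$ with $|\alpha_k(q_k)|^2=\sup_{D_{1-\sigma_k}(p_k)}|\alpha_k|^2$, and set $\lambda_k=|\alpha_k(q_k)|$, $\tau_k=\sigma_k/2$. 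From the inclusion $D_{\tau_k}(q_k)\subset D_{1-\tau_k}(p_k)$ combined with the (near-)maximality of $\sigma_k$ one deduces $\sup_{D_{\tau_k}(q_k)}|\alpha_k|^2\le 4\lambda_k^{2}$, while $\sigma_k^{2}\lambda_k^{2}>4$ forces $\lambda_k\ge 2/\sigma_k\ge 2$ and $\tau_k\lambda_k>1$.

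Next I would rescale. Put $\tilde F_k(x)=\lambda_k\bigl(F_k(x)-F_k(q_k)\bigr)$. A direct computation shows that $\tilde M_k\subset\re^{N}$ satisfies the still-elliptic equation $\vec{\tilde H}_k=(\omega/\lambda_k)^{\bot}$, whose right-hand side has norm $1/\lambda_k\le 1/2$; moreover $|\tilde\alpha_k|=|\alpha_k|/\lambda_k$, so $|\tilde\alpha_k(0)|=1$ and $|\tilde\alpha_k|^{2}\le 4$ on the intrinsic disk of radius $\tau_k\lambda_k>1$ around the origin in $\tilde M_k$. Because $\int|\alpha|^{n}\,dv_g$ is scale-invariant in dimension $n$, the integral curvature bound survives the rescaling:
\begin{equation*}
\int_{\tilde M_k}|\tilde\alpha_k|^n\,d\tilde v_{g_k}\;\le\;\int_{D_1(p_k)}|\alpha_k|^n\,dv_{g_k}\;\le\;\frac{1}{k}.
\end{equation*}
After extracting a subsequence with $\tau_k\lambda_k\to\rho\in[1,\infty]$, I would invoke a minor enlargement of the Compactness Theorem \ref{compactness}: its proof goes through verbatim when $\omega$ is replaced by any vector of norm $\le 1$, since the local graphical PDE $\mathbb{M}(\tilde f_k)=\tilde\omega_k^{\bot}$ remains uniformly elliptic with uniformly bounded right-hand side. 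Applied on each fixed ball $B^{N}(R)$ with $R<\rho$ and followed by a diagonal extraction, this furnishes a $C^\infty_{loc}$ limit $\tilde M_\infty\subset B^{N}(\rho)$, which is either a minimal submanifold (if $\lambda_k\to\infty$) or a translator with soliton vector of norm $\le 1/2$ (if $\lambda_k$ stays bounded).

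The contradiction is then immediate: smooth convergence on compact subsets combined with the vanishing of the integral above yields $\int_{\tilde M_\infty}|\tilde\alpha_\infty|^n\,dv_g=0$, hence $\tilde\alpha_\infty\equiv 0$; yet smoothness at the origin forces $|\tilde\alpha_\infty(0)|=\lim_k|\tilde\alpha_k(0)|=1$. The main technical obstacle, as anticipated, is precisely the lack of scale-invariance of the soliton equation: the blow-up produces a family of PDEs indexed by $\lambda_k$, so one must check that the compactness machinery is robust under this perturbation. Fortunately $|\omega/\lambda_k|\le 1$ keeps the forcing term in the graphical equation uniformly bounded, which is exactly the hypothesis needed for the extraction step of Lemma \ref{compactness} to apply unchanged.
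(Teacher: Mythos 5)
Your proof is correct and follows essentially the same route as the paper, which likewise reduces Lemma \ref{ass1} to Anderson's point-picking/rescaling argument for Assertion (S') and observes that the only modification needed is that the rescaled immersions satisfy $\vec{\tilde H}_k=(\omega/\lambda_k)^\bot$ with bounded right-hand side, so the compactness lemma still applies. Your derivation of $\lambda_k\ge 2$ from the contradiction hypothesis is in fact a slightly cleaner way to guarantee $|\omega/\lambda_k|\le 1$ than the paper's case-split on whether the rescaling factor exceeds $1$.
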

The proof follows exactly the arguments of the proof  Assertion (S') in \cite{A}. Minimality condition in that argument is used to apply compactness theorems of a sequence of immersions
 $\widetilde F_i:M_i\to R^{N}$ which are rescalings $\widetilde F_i = |\alpha|_i(y_i)F_i$ of minimal immersions $F_i$. In this case, if the $F_i$ are translating solitons, the mean curvature $\vec{\tilde H_i}$ of the rescaled immersion will satisfy $\vec{\tilde H_i} = w_i:= |\alpha|_i(y_i)^{-1} \omega_i^\bot$. If $|\alpha|_i(y_i)\ge 1$, then $|w_i|\le 1$, and the compactness theorem \ref{ass1} is still true on this family. If $|\alpha|_i(y_i)\le 1$ for some $i$, we can still have a family satisfying the hypothesis of the compactness theorem taking $\widetilde F_i = F_i$ for this $i$. With this small change in the definition of the $\widetilde F_i$, the argument in \cite {A} works also here.

 \begin{Lem}\label{ass2} For any small constant $\epsilon_0>0$ there is a constant $\delta>0$ such that if $F:M\to
\re^{N}$ is a translating soliton with $D_1(p)\cap
\partial M=\varnothing$ for some $p\in M$ and with
$$
\int_{D_1(p)}|\alpha|^n dv_g\leq \epsilon_0,
$$
then
\begin{equation}\label{A2}
\sup_{D_{1/2}(p)}|\alpha|^2\leq \delta.
\end{equation}
\end{Lem}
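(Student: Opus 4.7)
The plan is a standard compactness--contradiction argument. The point is that Lemma \ref{ass1} already yields the \emph{upper} bound $\sup_{D_{1/2}(p)}|\alpha|^2 \leq 16$ whenever the hypothesis holds (take $s=1/2$ in \eqref{A1}), but this bound is independent of the smallness of $\epsilon_0$. To upgrade from a fixed bound to arbitrarily small $\delta$, one couples Lemma \ref{ass1} with the compactness theorem (Lemma \ref{compactness}), following the strategy for minimal submanifolds in \cite{A} and \cite{CS}.

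Concretely, suppose the conclusion fails. Then there exist $\delta_0>0$ and a sequence of translating solitons $F_i:M_i\to\re^N$ with $D_1(p_i)\cap\partial M_i=\varnothing$ satisfying
\[
\int_{D_1(p_i)}|\alpha_i|^n\,dv_{g_i}\leq \tfrac{1}{i}\quad\text{and}\quad \sup_{D_{1/2}(p_i)}|\alpha_i|^2\geq \delta_0.
\]
Translating by $-F_i(p_i)$, we may assume $F_i(p_i)=0$ for every $i$. Lemma \ref{ass1} applied with $s=1/2$ provides the uniform pointwise bound $|\alpha_i|\leq 4$ on $D_{1/2}(p_i)$. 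Lemma \ref{compactness} then supplies a subsequence, still denoted $(F_i)$, converging in $C^\infty_{\mathrm{loc}}$ to a translating soliton $F_\infty:M_\infty\to\re^N$ through the origin with $|\alpha_\infty|\leq 4$. Smooth convergence preserves both the integral of $|\alpha|^n$ and the supremum of $|\alpha|^2$ on the limit ball $D_{1/2}(p_\infty)$, yielding simultaneously
\[
\int_{D_{1/2}(p_\infty)}|\alpha_\infty|^n\,dv_{g_\infty}=\lim_{i\to\infty}\int_{D_{1/2}(p_i)}|\alpha_i|^n\,dv_{g_i}=0
\]
(so $\alpha_\infty\equiv 0$ on $D_{1/2}(p_\infty)$) together with $\sup_{D_{1/2}(p_\infty)}|\alpha_\infty|^2\geq \delta_0>0$, a contradiction.

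The main technical obstacle is the usual care needed in passing to a smooth limit when the ambient base points $F_i(p_i)$ and the intrinsic geodesic balls $D_{1/2}(p_i)$ both vary with $i$: one normalizes the ambient position by translation, uses the uniform curvature bound $|\alpha_i|\leq 4$ on $D_{1/2}(p_i)$ to control the intrinsic geometry, and invokes $C^\infty_{\mathrm{loc}}$ convergence of the immersions to ensure convergence of induced metrics, intrinsic balls, and integrals of continuous quantities. Otherwise, the argument is a direct adaptation of the minimal-surface proof, with the only change being that the graph equation $\mathbb{M}(f_i)=0$ is replaced by $\mathbb{M}(f_i)=\omega_i^\bot$ with $|\omega_i^\bot|\leq 1$, which remains a quasilinear elliptic system to which the compactness machinery applies.
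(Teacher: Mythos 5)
Your proposal is correct and follows essentially the same route as the paper, which simply defers to Anderson's argument: a contradiction sequence with total curvature tending to zero, the Heinz-type estimate of Lemma \ref{ass1} supplying the uniform pointwise bound $|\alpha_i|\le 4$ needed to invoke the compactness theorem, and a flat limit contradicting the lower bound on $\sup|\alpha_i|^2$. The technical points you flag (normalizing base points, convergence of intrinsic balls and of the integrals) are exactly the ones implicitly delegated to \cite{A} in the paper's one-line proof, so nothing is missing.
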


This Lemmas can be proved from the above one like in \cite{A} with no change. Also the proof of Theorem \ref{gap} follows from Lemma \ref{ass2} like in \cite{A} with no change.

Once we know these lemmas, the following results are proved following exactly the same arguments that in \cite{A}

\begin{Pro}\label{decay}
Let $M^n\to \re^{N}$ be a complete translating soliton, $n\geq 2$
with $\int_M |\alpha|^n<\infty$. Fix
$0\in M$. Then there is a uniform constant $R_0>0$ such that
\begin{equation}\label{total}
\sup_{x\in \partial B_{R}(0)} |\alpha|^2(x)\leq R^{-2} \lambda
(\int_{B(R/2,2R)} |\alpha|^n dv_g)
\end{equation}
for all $R\geq R_0$, where $\lambda(\epsilon)\to 0$ as $\epsilon\to 0$
and $B(R/2,2R)=B_{2R}(0)-B_{R/2}(0)$.
\end{Pro}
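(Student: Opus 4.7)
My plan is to prove Proposition \ref{decay} by a blowup/rescaling argument in the spirit of \cite{A}, using the three lemmas already established (Lemmas \ref{compactness}, \ref{ass1}, \ref{ass2}). Define, for $\epsilon>0$,
$$\lambda(\epsilon) := \sup\left\{R^2\sup_{x\in\partial B_R(0)}|\alpha|^2(x) : R\ge R_0,\ \int_{B(R/2,2R)}|\alpha|^n\,dv_g\le \epsilon\right\}.$$
With this definition, the asserted inequality holds tautologically, and the content of the proposition reduces to the statement $\lambda(\epsilon)\to 0$ as $\epsilon\to 0$. I would pick $R_0$ large enough that every annular integral $\int_{B(R/2,2R)}|\alpha|^n$ with $R\ge R_0$ is smaller than the threshold $\epsilon_0$ of Lemma \ref{ass2}; this is possible because $\int_M|\alpha|^n<\infty$ forces the tails on annuli going to infinity to vanish.

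I would then argue by contradiction. Suppose $\lambda(\epsilon_j)\ge c>0$ for some $\epsilon_j\to 0$. Then there exist $R_j\to\infty$ and points $x_j\in\partial B_{R_j}(0)$ with $R_j^2|\alpha|^2(x_j)\ge c/2$ while $\int_{B(R_j/2,2R_j)}|\alpha|^n\,dv_g\to 0$. Rescale by $\tilde F_j := R_j^{-1}F$. Then $\tilde x_j:=R_j^{-1}x_j\in\partial B_1(0)$, the pointwise curvature scales so that $|\tilde\alpha_j|^2(\tilde x_j)\ge c/2$, and the scale-invariance of $\int|\alpha|^n$ yields $\int_{B_{\tilde g_j}(1/2,2)}|\tilde\alpha_j|^n\,dv_{\tilde g_j}\to 0$. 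Each $\tilde F_j$ still satisfies a translating-soliton equation, now with ambient vector $\tilde\omega_j := R_j^{-1}\omega$ of norm $1/R_j\to 0$. Since the earlier three lemmas were proved using only the bound $|\omega^\bot|\le 1$ on the right-hand side of the soliton equation, they continue to apply uniformly to the rescaled family.

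Applying Lemma \ref{ass2} to a small extrinsic ball around $\tilde x_j$ contained in the annulus $B_{\tilde g_j}(1/2,2)$ (after a harmless fixed dilation to normalize its radius), its vanishing curvature energy forces $\sup|\tilde\alpha_j|^2\le\delta$ on a smaller ball around $\tilde x_j$. By the Compactness Theorem \ref{compactness}, a subsequence converges in $C^\infty_{\mathrm{loc}}$ to a smooth limit $\tilde F_\infty:\tilde M_\infty\to\re^N$. Because $\tilde\omega_j\to 0$, the limit satisfies $\vec H_\infty = 0$, i.e., $\tilde M_\infty$ is \emph{minimal}. Lower semicontinuity of the $L^n$ norm under smooth convergence then gives $\int|\alpha_\infty|^n=0$ on the limiting domain, forcing $\alpha_\infty\equiv 0$. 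On the other hand, $C^\infty$ convergence preserves pointwise curvature, so $|\alpha_\infty|^2(\tilde x_\infty)\ge c/2>0$, the desired contradiction.

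The main obstacle I expect is carefully checking that the rescaled translating solitons $\tilde F_j$ genuinely fall under the hypotheses of Lemmas \ref{compactness} and \ref{ass2}. Concretely, one must ensure that writing them locally as graphs of functions $f_j$ satisfying the quasilinear elliptic system $\mathbb{M}(f_j)=\tilde\omega_j^\bot$ with $|\tilde\omega_j^\bot|\le 1$ gives locally uniform $C^{k,\alpha}$ control, exactly as in the minimal case of \cite{A}. The observation $|\tilde\omega_j|\to 0$ is precisely what makes the limiting submanifold minimal and thereby closes the contradiction; once this is granted, the remainder of the argument is a routine transcription of Anderson's decay proof.
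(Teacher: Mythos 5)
Your overall architecture (reduce the proposition to showing $\lambda(\epsilon)\to 0$, then blow up along a contradicting sequence and invoke Lemmas \ref{ass2} and \ref{compactness}) is the right skeleton, and it is essentially what the paper intends when it disposes of Proposition \ref{decay} by saying it follows Anderson's arguments. However, there is a genuine error at the pivotal step: the soliton equation scales in the opposite direction to what you claim. If $\tilde F_j = R_j^{-1}F$, then lengths scale by $R_j^{-1}$ and curvatures by $R_j$, so $\vec{\tilde H}_j = R_j\vec H = (R_j\omega)^\perp$; the rescaled immersion is a translating soliton with velocity vector $\tilde\omega_j = R_j\omega$ of norm $R_j\to\infty$, not $R_j^{-1}\omega$ of norm tending to $0$. (Compare the paper's own discussion of Lemma \ref{ass1}: there the rescaling $\tilde F_i = |\alpha|_i(y_i)F_i$ produces the velocity $|\alpha|_i(y_i)^{-1}\omega_i^\perp$, and the hypothesis $|\alpha|_i(y_i)\ge 1$ is exactly what keeps that velocity bounded by $1$; your rescaling factor $R_j^{-1}$ is smaller than $1$, so the same computation makes the velocity blow up.)

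This is not a cosmetic slip, because it occurs at the only place where the translating-soliton case differs from Anderson's minimal case, and your proof leans on it twice. First, Lemmas \ref{compactness} and \ref{ass2} were established only for solitons with $|\omega^\perp|\le 1$, so they do not apply to the family $\tilde F_j$ as you have set it up. Second, the limit is not forced to be minimal: the naive limiting equation is $\vec H_\infty = \lim (R_j\omega)^\perp$, which is meaningless without further input, so the contradiction with $\alpha_\infty\equiv 0$ does not close. To repair the argument one would need to control $|\tilde\omega_j^\perp| = |\vec{\tilde H}_j|\le \sqrt{n}\,|\tilde\alpha_j| = \sqrt{n}\,R_j|\alpha_j|$ on the rescaled annulus, which is precisely the decay $|\alpha|\lesssim R^{-1}$ that the proposition is trying to establish --- as written the step is circular. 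Some additional idea is needed (for instance an iteration that improves the exponent step by step, or a Heinz-type estimate proved directly at scale $R$); the paper itself does not supply one, but your write-up asserts the false scaling as if it resolved the difficulty.
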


We remark that using the Sobolev inequality \cite{MS} we have the
following gap result.

\begin{Thm} \label{mali2} Let $M^n\to \re^{N}$ be a complete translating soliton, $n\geq 2$. There is a small positive constant $\epsilon(n)$ such
that if for some $p\in (1,n)$,
$$
\int_M |\alpha|^p dv_g\leq \epsilon(n),
$$
then $\alpha=0$, that is, $M$ is a $n$-plane.
\end{Thm}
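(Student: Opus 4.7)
The plan is to combine three ingredients in the style of the classical gap theorems for minimal submanifolds: a Simons-type differential inequality for $u=|\alpha|$ on translating solitons, the Michael--Simon Sobolev inequality applied in the form appropriate to a submanifold with bounded mean curvature, and H\"older's inequality used to absorb nonlinear terms by the smallness of $\int_M|\alpha|^p dv_g$. The reason the gap extends below the critical exponent $p=n$ is that translating solitons are not scale invariant: the identity $\vec H=\omega^\bot$ forces $|\vec H|\le 1$, which in turn introduces only a mild ``zero-order'' correction in Michael--Simon and does not destroy the absorption scheme.

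First I would establish the higher-codimension analogue of Lemma \ref{simons} for translating solitons, namely a pointwise inequality of the form
$$u\,\Delta u + \nabla_{\omega^\top}u\cdot u + C_1(n,k)\,|\alpha|^2 u^2 \ge C_2(n,k)\,|\nabla u|^2,$$
derived from the Simons identity for $|\alpha|^2$ combined with the translator identity $\partial_t|\alpha|^2=-\nabla_{\omega^\top}|\alpha|^2$ and a Kato-type estimate. Multiplying by a compactly supported cutoff $\eta^2$, integrating with respect to $dv_g$, and using $|\omega^\top|\le 1$ together with a Cauchy--Schwarz absorption on the drift term, I would obtain a reverse-Poincar\'e estimate
$$\int |\nabla u|^2\eta^2\,dv_g \le C\int |\alpha|^2 u^2\eta^2\,dv_g + C\int u^2(|\nabla\eta|^2+\eta^2)\,dv_g.$$

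Next, I would apply the Michael--Simon Sobolev inequality in its $L^2$ form to $f=u\eta$, using $|\vec H|\le 1$ to obtain
$$\Bigl(\int (u\eta)^{\frac{2n}{n-2}}dv_g\Bigr)^{\frac{n-2}{n}} \le C(n)\int\bigl(|\nabla(u\eta)|^2 + u^2\eta^2\bigr)dv_g$$
(with the analogous $L^1$ form when $n=2$). Combining with the reverse-Poincar\'e inequality gives the chain
$$\Bigl(\int (u\eta)^{\frac{2n}{n-2}}\Bigr)^{\frac{n-2}{n}} \le C\int |\alpha|^2 u^2\eta^2 + C\int u^2(|\nabla\eta|^2+\eta^2).$$
Applying H\"older with exponents chosen so that the factor $|\alpha|^2$ is measured in $L^{n/(n-p)}$ or $L^{p/2}$ (depending on the range of $p\in(1,n)$) and the remaining factor $u^2\eta^2$ falls into $L^{n/(n-2)}$, the first term on the right becomes $C\bigl(\int|\alpha|^p\bigr)^{\sigma}\cdot\bigl(\int(u\eta)^{\frac{2n}{n-2}}\bigr)^{\frac{n-2}{n}}$ with $\sigma>0$. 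The smallness hypothesis $\int|\alpha|^p\le\epsilon(n)$ with $\epsilon(n)$ chosen so that $C\epsilon(n)^\sigma<\tfrac12$ lets me absorb this back into the left-hand side. Letting $\eta$ converge to $1$ by scaling the cutoff over balls of radius $R\to\infty$, the zero-order remainder $\int u^2\eta^2$ is controlled again by H\"older against $\int |\alpha|^p$ times a vanishing factor (or by the decay result Proposition \ref{decay} when useful), which forces $u\equiv 0$.

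The main obstacle is the pair of ``error'' terms absent in Anderson's minimal setting \cite{A}: the additive $|\vec H|f$ in Michael--Simon and the drift $\nabla_{\omega^\top}u$ in the Simons-type inequality. Both must be shown not to spoil the absorption, and the delicate point is that when $p$ is close to $1$ the H\"older pairing above leaves little room. One may need to interpolate: bound $\int u^2\eta^2$ via $(\int u^p)^{2/p}(\int \eta^{2p/(p-2)})^{(p-2)/p}$ when $p>2$, or, for $p\in(1,2]$, feed a bootstrap through Michael--Simon with $f=u^s$ to raise the effective integrability of $u$ before applying H\"older. A secondary technical point is to make the Simons computation in general codimension rigorous; here I would follow the framework used for the previous lemmas in section \ref{CGR}, changing $\vec H=0$ to $\vec H=\omega^\bot$ exactly as was done for the compactness Lemma \ref{compactness} and Lemmas \ref{ass1}, \ref{ass2}.
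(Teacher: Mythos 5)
Your route is genuinely different from the paper's, and it has a gap that I do not think your proposed fixes can close. The paper does not attempt a direct Caccioppoli--Sobolev absorption at all: it first runs Moser iteration (using the Michael--Simon Sobolev inequality \cite{MS}) to conclude that $|\alpha|$ is \emph{uniformly bounded}, say $|\alpha|\le C$, then uses the elementary interpolation $\int_M|\alpha|^n\,dv\le C^{n-p}\int_M|\alpha|^p\,dv\le C\epsilon(n)$ to pass from the subcritical norm to the critical one, and finally invokes the already-proven $L^n$ gap result (Theorem \ref{gap}, itself established by the compactness/Heinz-estimate machinery of Lemmas \ref{compactness}--\ref{ass2}). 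Your proposal never uses Theorem \ref{gap} and never produces an $L^\infty$ bound; instead it tries to absorb the reaction term $\int|\alpha|^2u^2\eta^2=\int u^4\eta^2$ directly into the Sobolev term. That absorption is governed by H\"older in the form $\int u^4\eta^2\le\bigl(\int(u\eta)^{2n/(n-2)}\bigr)^{(n-2)/n}\bigl(\int_{\mathrm{supp}\,\eta}u^n\bigr)^{2/n}$, i.e.\ it requires smallness of the \emph{critical} norm $\|\alpha\|_{L^n}$ --- which is exactly what the hypothesis fails to give when $p<n$, since $L^p$ with $p<n$ does not control $L^n$ on a set of infinite volume. Your alternative pairing ($|\alpha|^2$ in $L^{p/2}$, remainder in $L^{p/(p-2)}$) both requires $p>2$ and mismatches the Sobolev exponent, so it cannot be absorbed either.

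The repairs you sketch are circular or fail quantitatively. The bootstrap ``apply Michael--Simon to $f=u^s$ to raise integrability'' produces at each stage a term $\int u^{s+2}\eta^2$ whose control again needs local $L^n$ smallness of $u$, so no iteration gets off the ground without the very bound one is trying to prove. The final step is also broken: for $p>2$ the H\"older estimate $\int u^2\eta^2\le(\int u^p)^{2/p}(\int_{B_R}\eta^{2p/(p-2)})^{(p-2)/p}$ has a second factor growing at least like $R^{n(p-2)/p}$ (translating solitons, like minimal submanifolds, have at least polynomial volume growth), so it does not vanish as $R\to\infty$; for $p\le2$ the inequality points the wrong way. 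What is missing is precisely the paper's two-step mechanism: an a priori $L^\infty$ curvature bound (Moser iteration), after which the subcritical hypothesis \emph{does} control $\int|\alpha|^n$ by interpolation, reducing everything to the critical gap theorem already in hand. Your treatment of the drift term $\nabla_{\omega^\top}u$ and of the $|\vec H|\le1$ correction in Michael--Simon is fine, but those were never the obstruction.
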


The proof of above result is similar to that of Theorem \ref{gap}. We only
give an outline of the proof. In fact, one can use Moser's iteration
argument and the Sobolev inequality \cite{MS} to conclude that $|\alpha|$
is uniformly bounded, saying $|\alpha|\leq C$ for some uniform constant
$C>0$. Then we have
$$
\int_M |A|^ndv\leq C^{n-p}\int_M |A|^pdv\leq C\epsilon(n).
$$
Then we can apply Theorem \ref{gap} to get the conclusion of Theorem
\ref{mali2}.

\end{document}